\newtheorem{thm}{Theorem}[section]
\newtheorem{lem}[thm]{Lemma}
\newtheorem{obs}[thm]{Observation}
\newtheorem{cor}[thm]{Corollary}
\newtheorem{prob}[thm]{Problem}
\newtheorem{dfn}[thm]{Definition}
\newtheorem{conj}[thm]{Conjecture}
\newtheorem{cons}[thm]{Construction}
\newcommand{\floor}[1]{\left\lfloor{#1}\right\rfloor}
\newcommand{\ceil}[1]{\left\lceil{#1}\right\rceil}
\begin{document}

\title{Minimum $k$-critical-bipartite graphs:\\
the irregular Case}
\author{Sylwia Cichacz$^1$}\thanks{This work was partially supported by the Faculty of Applied Mathematics AGH UST statutory tasks within subsidy of Ministry of Science and Higher Education.}
\author{Agnieszka Görlich$^1$}
\author{Karol Suchan$^{2,1}$}
\address{$^1$ AGH University, al. A. Mickiewicza 30, 30-059 Krakow, Poland}
\address{$^2$Universidad Diego Portales, Av. Ejército Libertador 441, 8370191 Santiago, Chile}
\email{cichacz@agh.edu.pl, forys@agh.edu.pl, karol.suchan@mail.udp.cl}

\begin{abstract}
We study the problem of finding a minimum $k$-critical-bipartite graph of order $(n,m)$: a bipartite graph $G=(U,V;E)$, with $|U|=n$, $|V|=m$, and $n>m>1$, which is $k$-critical-bipartite, and the tuple $(|E|, \Delta_U, \Delta_V)$, where $\Delta_U$ and $\Delta_V$ denote the maximum degree in $U$ and $V$, respectively, is lexicographically minimum over all such graphs. $G$ is $k$-critical-bipartite if deleting at most $k=n-m$ vertices from $U$ yields $G'$ that has a complete matching, i.e., a matching of size $m$. Cichacz and Suchan~\cite{CS} solved the problem for biregular bipartite graphs. Here, we extend their results to bipartite graphs that are not biregular. We also prove tight lower bounds on the connectivity of $k$-critical-bipartite graphs.\end{abstract}

\keywords{fault-tolerance, interconnection network, bipartite graph, complete matching, algorithm, k-critical-bipartite graph}
\subjclass[2010]{05C35, 05C70, 05C85, 68M10, 68M15}

\date{\today}
\maketitle

\section{Introduction}\label{sec:introduction}

An important body of knowledge has been developed on networks prone to faults. When representing the network as a simple undirected graph $G=(V, E)$\footnote{For standard terms and notations in graph theory, the reader is referred to the textbook by Diestel~\cite{D2017}}, the faults can be modeled as vertex or edge deletions, depending if the faults occur to the nodes or links of the network, respectively. In this work, we focus on node faults.

Many applications in diverse fields consider the robustness of assignments. For example, consider a network of $m$ sensing nodes and $n$ relay nodes with $n \geq m$. The sensing nodes need to transmit their readings through the relay nodes, based on a one-to-one assignment (due to some technological considerations), using a pre-established infrastructure of links. It is natural to ask whether we can design a network such that all the sensing nodes can do their work while no more than $k=n-m$ relay nodes are faulty. This kind of network is called a $k$-critical-bipartite graph and is part of the wider field of research related to fault-tolerant networks.

Besides applications in the design of fault-tolerant networks, $k$-critical-bipartite graphs could find applications in the design of supercomputer architectures \cite{HPCReview}, flexible processes \cite{FlexProcRreview}, personnel rostering \cite{RosteringReview}, and other areas of operations research \cite{InterdictionSurvey}. Section 2 of \cite{CS} presents a brief non-exhaustive overview of connections to other areas of research.

\subsection{Fault-tolerant graphs}
Given a graph $H$ and a positive integer $k$, a graph $G$ is called \emph{$k$-fault-tolerant with respect to $H$}, denoted by  $k$-FT$(H)$, if $G-S$ contains a subgraph isomorphic to $H$ for every $S\subset V(G)$ with $|S|\leq k$. Clearly, under this definition, it is enough to check that the property holds for $S\subset V(G)$ with $|S| = k$.

Fault-tolerance was introduced by Hayes~\cite{Hayes1976} in 1976 as a graph theoretic model of computer or communication networks working correctly in the presence of faults. Therein, the main motivation for the problem of constructing $k$-fault-tolerant graphs lies in finding fault-tolerant network architectures. A graph $H$ represents the desired interconnection network and a $k$-FT$(H)$ graph $G$ allows one to emulate the graph $H$ even in the presence of $k$ vertex (processor) faults. 

The problem has been systematically studied in different aspects. Clearly, given a graph $H$, a complete graph on $|V(H)|+k$ vertices is $k$-FT$(H)$. So it is interesting to study different quality measures motivated by diverse applications. Hayes~\cite{Hayes1976} and Ajtai et al.~\cite{AABC} considered $k$-FT$(H)$ graphs with $|V(H)|+k$ vertices and the number of edges as small as possible. A different quality measure of $k$-FT$(H)$ graphs was introduced by Ueno et al. \cite{UBHS1993}, and independently by Dudek et al.~\cite{DSZ}, where the authors were interested in $k$-FT$(H)$ graphs having as few edges as possible, disregarding the number of vertices (see also~\cite{Z2}, ~\cite{PZ}). Yet another setup was studied by Alon and Chung~\cite{AC}, Ueno and Yamada~\cite{UY}, and Zhang~\cite{Zhang}. They allowed $O(t)$ spare vertices in $k$-FT$(H)$ graphs and focused on minimizing the maximum degree (giving priority to the scalability of a network). Other results on $k$-fault-tolerance can be found, for example in \cite{ZAK2014421}.

\subsection{$k$-critical-bipartite graphs}

It is well known that the bipartite graphs are exactly the graphs that are $2$-colorable. Throughout the paper, we will use the notation $G=(U, V; E)$ for a bipartite graph $G$ with color classes $U$ and $V$. Let $|U|=n$ and $|V|=m$. We say that $G$ is of order $(n, m)$. We say that $G$ is \textit{biregular} if the degrees of the vertices in both color classes are constant, and {\em irregular} otherwise. Let $\delta_U(G)$, $\Delta_U(G)$, $\delta_V(G)$, $\Delta_V(G)$, denote the minimum and maximum degree in $G$ of a vertex in $U$ and $V$, respectively. Where it does not lead to confusion, we do not mention the graph explicitly, for example, stating just $\delta_U$ instead of $\delta_U(G)$. If $\delta_U = \Delta_U = a$ and $\delta_V = \Delta_V = b$, then we say that $G$ is $(a,b)$-regular. A complete graph of order $n$ is denoted $K_n$ and a complete bipartite graph of order $(n,m)$ is denoted $K_{n,m}$.

A $k$-critical-bipartite graph $G=(U, V;E)$, with $|U|=n$ and $|V|=m$, such that $k=n-m\geq 0$ can be seen as a $k$-FT$(H)$ graph where $H$ is a matching of size $|V|$ and the $k$ faults can occur only in $U$. Cichacz and Suchan~\cite{CS} introduced the problem of finding a minimum $k$-critical-bipartite graph according to the following definition.

\begin{dfn}[\cite{CS}]\label{MkCBG}
A bipartite graph $G=(U,V;E)$, with $|U|=n$, $|V|=m$, and $n>m>1$, is a Minimum $k$-Critical-Bipartite Graph of order $(n,m)$, M$k$CBG-$(n,m)$, if it is $k$-critical-bipartite, and the tuple $(|E|, \Delta_U, \Delta_V)$ is lexicographically minimum over all such graphs. 
\end{dfn}

Note that, given integer $n$ and $m$ with $n>m>1$, the graph $G^*(U,V;E)$ with $|U|=n$ and $|V|=m$, obtained by taking a matching of size $m$ and adding to $U$ another $k=n-m$ vertices adjacent to every vertex in $V$ gives a graph that is minimal $k$-critical-bipartite, i.e., removing any edge from $G^*$ yields a graph that is not $k$-critical-bipartite, but is not minimum according to the definition given above. Indeed, $\Delta_U(G^*) = m$, whereas we show in this paper that there exist $k$-critical-bipartite graphs $G=(U,V;E)$ that also have $|E(G)|=m(n-m+1)$, but with $\Delta_U(G) = \ceil{\frac{m(n-m+1)}{n}}$. So a minimum $k$-critical-bipartite graph of order $(n,m)$ can not be obtained by simply taking any $k$-critical-bipartite graph and removing edges, one by one, as long as the property is preserved.

Cichacz and Suchan~\cite{CS} solved the problem of finding M$k$CBG-$(n,m)$ in the case of biregular graphs, leaving open the case of irregular bipartite graphs. We solve it in this paper.

\subsection{Related work}\label{subsec:rw}

The concept of a $k$-critical-bipartite graph stems from older studies related to matchings.

In a graph $G$ of even order $n$, a \textit{perfect matching} (or 1-\textit{factor}) $M$, is a matching containing $n/2$ edges. In other words, a perfect matching covers every vertex of $G$. 

Let $G$ be a graph of order $n$ with a perfect matching $M$, and let $k$, $n/2 > k \geq 0$, be an integer. A graph $G$ of even order $n\ge 2k+2$ is called \textit{$k$-extendable} if every matching of size $k$ in $G$ extends to (i.e., is a subset of) a perfect matching in $G$. This concept was introduced by Plummer in 1980~\cite{Plummer}. 

By the following result of Plummer, $k$-extendability of a bipartite graph of order $2(n+k)$ can be seen as fault-tolerance for $H$ being a matching of size $n$, under attacks that consist in removing (at most) $k$ vertices from each color class. 

\begin{thm}[\cite{Plummer2}]\label{thm:extendability_ft}
    Let $G$ be a connected bipartite graph on $n$ vertices with the color classes $(U, V)$. Suppose $k$ is a positive integer such that $k \leq (n-2)/2$. Then the following are equivalent:
    \begin{enumerate}
        \item $G$ is $k$-extendable,
        \item $|U|=|V|$ and for each non-empty subset $X$ of $U$ such that $|X| \leq |U|-k$, there is $|N(X)|\geq |X|+k$.
        \item For all $U' \subset U$ and $V' \subset V$, $|U'|=|V'|=k$, the graph $G'=G - U' - V'$ has a perfect matching.
    \end{enumerate}
\end{thm}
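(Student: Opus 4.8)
The plan is to prove the cyclic chain of implications $(1)\Rightarrow(2)\Rightarrow(3)\Rightarrow(1)$, which establishes the full equivalence. Throughout I write $t=|U|$. Since the notion of $k$-extendability is defined for graphs possessing a perfect matching, $G$ has one, so $|U|=|V|=t$ and the hypothesis $k\le(n-2)/2$ becomes $1\le k\le t-1$. Two of the three implications are short applications of Hall's theorem, and essentially all of the difficulty is concentrated in $(1)\Rightarrow(2)$.

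For $(2)\Rightarrow(3)$, fix $U'\subseteq U$ and $V'\subseteq V$ with $|U'|=|V'|=k$ and set $G'=G-U'-V'$, a balanced bipartite graph with parts of size $t-k$. For any nonempty $A\subseteq U\setminus U'$ one has $N_{G'}(A)=N_G(A)\setminus V'$, hence $|N_{G'}(A)|\ge|N_G(A)|-k$; as $|A|\le t-k$, condition $(2)$ gives $|N_G(A)|\ge|A|+k$ and therefore $|N_{G'}(A)|\ge|A|$. Thus Hall's condition holds in $G'$, which, being balanced, has a perfect matching. For $(3)\Rightarrow(1)$, take any matching $M$ of size $k$, put $U'=V(M)\cap U$ and $V'=V(M)\cap V$, use $(3)$ to get a perfect matching $M''$ of $G-U'-V'$, and note that $M\cup M''$ is a perfect matching of $G$ containing $M$; hence $G$ is $k$-extendable.

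The real work is in $(1)\Rightarrow(2)$, which I will prove by contraposition: assuming a nonempty $X\subseteq U$ with $|X|=x\le t-k$ and $W:=N(X)$, $y:=|W|\le x+k-1$, I will construct a matching of size $k$ that extends to no perfect matching. Deleting the endpoints of a candidate matching $M$, the set $X$ retains all its neighbours inside $W\setminus V(M)$, and a short count shows that $M$ fails to extend as soon as $M$ uses at least $y-x+1$ edges joining $U\setminus X$ to $W$ while using no vertex of $X$, since then $X$ loses strictly more neighbours than vertices and Hall's condition is destroyed. Starting from a fixed perfect matching $M_0$ and discarding its $x$ edges incident to $X$ already produces $y-x$ such edges; it then remains to gain one more and to pad the result up to exactly $k$ edges, the latter being possible since $x\le t-k$ leaves at least $k$ vertices available in $U\setminus X$.

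I expect the crux to be producing that one extra edge from $U\setminus X$ into $W$, equivalently showing that the maximum matching between $W$ and $U\setminus X$ is at least $y-x+1$. If it were only $y-x$, a König/Hall deficiency argument furnishes a set $S\subseteq W$ whose neighbourhood is exactly $X\cup N_{U\setminus X}(S)$ and is \emph{tight}, of size $|S|$; consequently no edge of $G$ leaves $S$ except into this neighbourhood. Here connectivity enters decisively: the tight set together with its neighbourhood is a proper vertex subset of $G$, so some edge must leave it, and such an edge can only run from $X\cup N_{U\setminus X}(S)$ to $V\setminus S$. Converting this escaping edge, through an $M_0$-alternating path, into an augmentation of the matching between $W$ and $U\setminus X$ is the delicate step; it is precisely what fails for disconnected graphs and is the reason connectivity is assumed. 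Once the blocking matching is obtained it contradicts $(1)$, closing the cycle.
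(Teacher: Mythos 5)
The paper itself gives no proof of this statement---it is quoted from \cite{Plummer2}---so your proposal can only be judged on its own merits. Your implications $(2)\Rightarrow(3)$ and $(3)\Rightarrow(1)$ are correct and complete; these are the standard Hall-type arguments. The gap is in $(1)\Rightarrow(2)$, and it is not merely that your ``delicate step'' is left unproven: as stated, that step is false. You claim that if the maximum matching of the auxiliary graph $H$ between $W=N(X)$ and $U\setminus X$ has size only $y-x$, then connectivity (via the escaping edge from the tight set) forces an augmentation, i.e.\ $\nu(H)\geq y-x+1$. Counterexample: take $U=\{u_0,u_1,u_2\}$, $V=\{v_0,v_1,v_2\}$, $E=\{u_0v_0,\,u_0v_1,\,u_1v_0,\,u_1v_1,\,u_1v_2,\,u_2v_2\}$, and $k=2$ (so $n=6$ and $k\leq(n-2)/2$ holds). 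This graph is connected and has the perfect matching $M_0=\{u_0v_0,u_1v_1,u_2v_2\}$. The set $X=\{u_0\}$ is deficient: $x=1\leq|U|-k$, $W=\{v_0,v_1\}$, $y=2\leq x+k-1$. Here $H$ has only the edges $u_1v_0,u_1v_1$, so $\nu(H)=1=y-x$; the tight set is $S=\{v_0,v_1\}$ with $N_G(S)=\{u_0,u_1\}$, and the escaping edge $u_1v_2$ exists exactly as you predict---yet no augmentation exists, because $\nu(H)$ really is $1$. Connectivity alone cannot rescue the claim.

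Worse, in this example your entire scheme---blocking ``at $X$'' with a matching that avoids $X$---is unrepairable: the only matchings of size $2$ avoiding $u_0$ are $\{u_1v_0,u_2v_2\}$ and $\{u_1v_1,u_2v_2\}$, and both extend to perfect matchings. The graph is indeed not $2$-extendable (consistent with the theorem), but the witness is $\{u_0v_0,u_1v_2\}$, which uses a vertex of $X$ and destroys Hall's condition at the vertex $u_2\notin X$, not at $X$. So any correct proof of $(1)\Rightarrow(2)$ must be able to shift the violation to a different deficient set: for instance, when $\nu(H)=y-x$, the tight set $S\subseteq W$ satisfies $N_G(S)=X\cup N_H(S)$ and $|N_G(S)|=|S|$ with $M_0$ matching $S$ onto $N_G(S)$, and then $U''=U\setminus N_G(S)$ is a new deficient set, disjoint from $X$, satisfying $N_G(U'')=V\setminus S$; in the example above the blocking matching is built for $U''=\{u_2\}$, its key edge being precisely your escaping edge. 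Turning this into a complete argument (including the case $|U''|>|U|-k$) is where the real work of Plummer's theorem lies; classical routes do this via induction on $k$, using that a connected $k$-extendable graph is $(k-1)$-extendable, or by proving $(1)\Rightarrow(3)$ directly and then $(3)\Rightarrow(2)$, which is easy (delete $k$ vertices covering part of $N(X)$ and $k$ vertices of $U\setminus X$). Separately, your padding claim (``possible since $x\leq t-k$'') is also unjustified when $y>|U|-k$, though that is fixable by reusing the $M_0$-edges between $W\setminus M_0(X)$ and $U\setminus X$ inside the blocking matching; the counterexample above is the essential gap.
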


There is a close relation between $k$-extendability and $k$-factor-criticality. A graph $G$ is called \textit{$k$-factor-critical} (also called simply \textit{$k$-critical}) if, after deleting any $k$ vertices, the remaining subgraph has a perfect matching. This concept was first introduced and studied for $k = 2$ by Lov\'asz \cite{Lovasz} under the name of a \textit{bicritical graph}. For $k>2$ it was introduced by Yu in 1993 \cite{Yu} and independently by Favaron in 1996 \cite{Favaron}.

It is straightforward that a bipartite graph cannot be $k$-critical. Li and Nie amended the definition of a $k$-critical graph with respect to bipartite graphs \cite{LN}. It requires that the $k$ vertices to be deleted lie in the color class with more vertices. Formally, a bipartite graph $G=(U, V; E)$ such that $k=|U|-|V|\geq 0$ is a \textit{$k$-critical-bipartite graph} if, after deleting any $k$ vertices {from the set $U$}, the remaining subgraph has a perfect matching - and this is the definition that we are using.

The problem of designing $k$-critical graphs (for the class of general graphs) with the minimum number of edges was studied by Zhang et al. in \cite{zhang2012minimum}. Using the notation $k$-FT($pK_c$), with positive integers $k$, $p$, and $c$, for a graph in which the removal of $k$ vertices leaves a subgraph that contains $p$ disjoint copies of $K_c$, the authors gave a construction for $k$-FT($pK_2$) graphs of minimum size for any generally feasible values of $p$ and $k$. This result was extended to higher values of $c$ by Cichacz et al~\cite{kliki}, who characterized minimum $k$-FT($pK_c$) graphs for $k=1$, any positive integer $p$, and $c>3$. Zhang et al. in \cite{zhang2012minimum} also gave a construction for minimum size $k$-extendable bipartite graphs. 

\subsection{Structure of the paper}

The structure of this paper is as follows. In Section \ref{sec:main_problem}, we detail the problem of designing a minimum $k$-critical-bipartite graph. In Section \ref{sec:pos_construction}, we give a construction that yields a minimum $k$-critical-bipartite graph of order $(n,m)$ for any values of $n$ and $m$ such that $n>m>1$, with $k=n-m$. We show that a $k$-critical-bipartite graph $G=(U,V;E)$ of order $(n,m)$ is minimum if $|E|=m(n-m+1)$, $\Delta(U)=\ceil{\frac{m(n-m+1)}{n}}$, and $\Delta(V)=n-m+1$. In Section \ref{sec:neg_construction}, we give a construction that yields graphs $G=(U,V;E)$ of order $(n,m)$ that also have $|E|=m(n-m+1)$, $\Delta(U)=\ceil{\frac{m(n-m+1)}{n}}$, and $\Delta(V)=n-m+1$, but are not $k$-critical-bipartite - so these properties are not sufficient for a graph to be $k$-critical-bipartite. In Section \ref{sec:connectivity} we present tight lower bounds for the connectivity of $k$-critical-bipartite graphs. And we conclude with some final remarks in Section \ref{sec:final_remarks}.

\section{Main problem}\label{sec:main_problem}

Let $G=(U, V;E)$ be a bipartite graph, with $|U|=n$ and $|V|=m$, such that $k=n-m > 0$. Let $\tilde{G}=(U, V \cup D; E \cup E^D)$ be the graph obtained from $G$ by adding to $V$ a set $D$ of $k$ vertices and making them adjacent to all vertices in $U$. Li and Nie \cite{LN} gave the following characterization of $k$-critical-bipartite graphs.

\begin{thm}[\cite{LN}]\label{th:tilde}
$G$ is $k$-critical-bipartite if and only if $\tilde{G}$ is $k$-extendable.
\end{thm}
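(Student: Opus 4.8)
The plan is to prove Theorem~\ref{th:tilde} by relating the two deletion-based
definitions directly, using Theorem~\ref{thm:extendability_ft} as the bridge.
Recall the construction: $\tilde{G}=(U, V\cup D; E\cup E^D)$ adds to $V$ a set $D$
of $k=n-m$ vertices, each joined to \emph{all} of $U$. Then $|U|=n$ and
$|V\cup D|=m+k=n$, so $\tilde{G}$ has equal color classes, as required for
$k$-extendability of a bipartite graph. I would work with the third (purely
combinatorial, deletion-based) characterization in
Theorem~\ref{thm:extendability_ft}: $\tilde{G}$ is $k$-extendable if and only if
for every $U'\subset U$ and $W'\subset V\cup D$ with $|U'|=|W'|=k$, the graph
$\tilde{G}-U'-W'$ has a perfect matching. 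The whole proof then becomes a careful
translation between deleting $k$ vertices from $U$ in $G$ and deleting balanced
$k$-sets from both sides of $\tilde{G}$.

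First I would prove the forward direction. Assume $G$ is $k$-critical-bipartite;
I want to verify the condition in item~(3) for $\tilde{G}$. Given any
$U'\subset U$ and $W'\subset V\cup D$ with $|U'|=|W'|=k$, the key observation is
that the $D$-vertices are universal to $U$, so they can absorb the ``extra''
$U$-vertices. Concretely, write $W'=W_V'\cup W_D'$ with $W_V'\subset V$ and
$W_D'\subset D$, and let $j=|W_D'|$, so $|D\setminus W_D'|=k-j$. The plan is: use
the $k$-criticality of $G$ to match the set $V\setminus W_V'$ (which has size
$m-(k-j)=m-k+j$) into $U\setminus U'$ after discarding an appropriate set of $k$
vertices of $U$; then match the $k-j$ surviving $D$-vertices to the $k-j$
still-unmatched vertices of $U\setminus U'$, which is always possible because
every $D$-vertex sees all of $U$. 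Counting confirms the sizes line up: after
removing $U'$ there are $n-k=m$ vertices in $U$, and $V\cup D$ minus $W'$ also has
$m$ vertices, so a perfect matching is exactly what we must build, and the two
phases above produce one. The main bookkeeping obstacle is ensuring that the
$k$ vertices deleted from $U$ when invoking $k$-criticality of $G$ can be taken
to include $U'$ while leaving enough room for the universal $D$-matching; I would
handle this by choosing the deleted $U$-set to be $U'$ together with $k-|U'|=0$
further vertices (since $|U'|=k$), then matching $V\setminus W_V'$ into the
remaining $m$ vertices of $U\setminus U'$ and re-routing: the cleanest phrasing
is to first delete $W_V'$ from $V$, invoke criticality to match what remains,
and peel off vertices for $D$.

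For the converse, I would argue contrapositively or directly: assume $\tilde{G}$
is $k$-extendable and take any $U'\subset U$ with $|U'|=k$; I must show
$G-U'$ has a complete matching of $V$ into $U\setminus U'$. The natural choice is
to apply item~(3) of Theorem~\ref{thm:extendability_ft} to $\tilde{G}$ with this
same $U'$ on the $U$-side and with $W'=D$ on the other side (note $|D|=k$).
Deleting $U'$ and $D$ from $\tilde{G}$ leaves precisely $G-U'$ (all the added
$D$-vertices and their incident edges are gone), which has $m$ vertices on each
side, so the guaranteed perfect matching of $\tilde{G}-U'-D$ is exactly a
complete matching of $V$ in $G-U'$. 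Since $U'$ was arbitrary, $G$ is
$k$-critical-bipartite. This direction is the easy one because the single
canonical choice $W'=D$ collapses $\tilde{G}$ back to $G-U'$; the only thing to
check is the size constraint $k\le (|V(\tilde{G})|-2)/2$ of
Theorem~\ref{thm:extendability_ft}, i.e. $k\le n-1$, which holds since
$k=n-m<n$ as $m\ge 1$.

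I expect the forward direction to be the delicate part, specifically the
reconciliation of the two sidedly-asymmetric deletion patterns: in $G$ the faults
live only in $U$, whereas $k$-extendability of $\tilde{G}$ demands robustness to
\emph{balanced} deletions hitting both color classes. The universality of $D$ is
the lever that makes everything go through, and I would make the argument
airtight by phrasing the construction of the perfect matching in
$\tilde{G}-U'-W'$ as an explicit two-stage greedy assignment (first the $V$-side
via criticality of $G$, then the residual $D$-vertices via their complete
adjacency), verifying at each stage that the requisite number of free
$U$-vertices remains.
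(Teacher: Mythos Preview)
The paper does not supply its own proof of Theorem~\ref{th:tilde}; the result is simply quoted from Li and Nie~\cite{LN}. So there is no ``paper's proof'' to compare against. That said, your argument is essentially correct and is the natural one: the converse direction via the choice $W'=D$ is exactly right, and in the forward direction the two-phase matching (first take the perfect matching of $G-U'$ guaranteed by $k$-criticality, restrict it to $V\setminus W_V'$, then use universality of $D$ to absorb the $k-j$ freed vertices of $U\setminus U'$) works cleanly once you state it that way---your prose around ``deleting $k$ vertices of $U$'' is more convoluted than necessary, since $|U'|=k$ already, so you should just invoke $k$-criticality on $G-U'$ directly and then restrict.

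One small gap: Theorem~\ref{thm:extendability_ft}, which you use as the bridge, is stated for \emph{connected} bipartite graphs, and you never verify that $\tilde{G}$ is connected. In the forward direction this follows from Theorem~\ref{LN} (if $G$ is $k$-critical-bipartite then $G$ is connected, hence so is $\tilde{G}$). In the converse direction, $k$-extendability of $\tilde{G}$ with $k\ge 1$ forces a perfect matching, so no vertex of $V$ is isolated; together with the fact that $U\cup D$ induces a connected subgraph (complete bipartite), this gives connectivity of $\tilde{G}$. You should insert these checks before invoking Theorem~\ref{thm:extendability_ft}.
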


They also described the connectivity of $k$-critical-bipartite graphs in the following theorem.
\begin{thm}[\cite{LN}] Let $G=(U, V;E)$ be a bipartite graph such that $k=|U|-|V|> 0$.  If $G$ is $k$-critical-bipartite, then $G$ is connected.\label{LN}
\end{thm}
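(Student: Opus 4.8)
The plan is to argue by contradiction: assume $G$ is $k$-critical-bipartite but disconnected, and then exhibit a set $U' \subseteq U$ with $|U'| = k$ such that $G - U'$ has no matching saturating $V$, contradicting the definition of a $k$-critical-bipartite graph. First I would record two structural facts. Since deleting any $k$ vertices of $U$ leaves a matching saturating $V$, in particular $G$ itself has such a matching, so by Hall's theorem $|N_G(T)| \geq |T|$ for every $T \subseteq V$. Writing the connected components of $G$ as $C_1, \dots, C_r$ with $r \geq 2$, and setting $U_i = U \cap C_i$, $V_i = V \cap C_i$, $n_i = |U_i|$, $m_i = |V_i|$, the saturating matching splits into matchings of the $C_i$ saturating each $V_i$; hence $n_i \geq m_i$ for every $i$. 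I would then define the \emph{surplus} $s_i = n_i - m_i \geq 0$ and note the identity $\sum_i s_i = n - m = k$. Observe also that a component with $V_i = \emptyset$ is just an isolated vertex of $U$ and so has $s_i = n_i \geq 1$.

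The crucial observation is that a matching saturating $V_i$ inside $C_i$ needs at least $m_i$ distinct vertices of $U_i$; hence if we delete more than $s_i$ vertices from $U_i$, fewer than $m_i$ vertices of $U_i$ remain and $V_i$ can no longer be saturated. This motivates the following adversarial deletion: pick a component $C_i$ with $m_i \geq 1$ (one exists since $V \neq \emptyset$), delete $s_i + 1$ vertices from $U_i$ — feasible because $n_i = m_i + s_i \geq s_i + 1$ — and place the remaining $k - (s_i+1)$ deletions arbitrarily in $U \setminus U_i$. A short computation gives $|U \setminus U_i| = \sum_{j \neq i}(m_j + s_j) = (k - s_i - 1) + \sum_{j \neq i} m_j + 1 \geq k - s_i - 1$, so there is always enough room for the leftover deletions; the \emph{only} constraint for this $U'$ to be well defined and of size exactly $k$ is $k - (s_i+1) \geq 0$, i.e. $s_i < k$. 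For any such choice, $G - U'$ fails to saturate $V_i$, the desired contradiction.

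The main (and essentially only) obstacle is the degenerate case in which no admissible component exists, namely when every component $C_i$ with $m_i \geq 1$ has $s_i \geq k$; since $s_i \leq \sum_j s_j = k$, this forces $s_i = k$. I would dispose of it by a counting argument based on the surplus identity: if some component has $s_i = k$, then $\sum_{j \neq i} s_j = 0$ with all terms nonnegative forces $s_j = 0$ for every other component. But an isolated-$U$ component has positive surplus, and a component with $V_j \neq \emptyset$ and $s_j = 0$ would itself be admissible (as $0 < k$); both possibilities are contradictions, so there are no other components and $r = 1$, contradicting $r \geq 2$. Hence an admissible component always exists, the deletion of the previous paragraph applies, and $G$ must be connected. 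I expect the only delicate points to be the bookkeeping that the leftover deletions can always be placed and the clean treatment of isolated-$U$ components, both of which reduce to the single identity $\sum_i s_i = k$.
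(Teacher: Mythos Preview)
Your argument is correct. One small remark: the Hall inequality you record at the outset, $|N_G(T)|\ge|T|$, is never actually used later; the proof runs entirely on the surplus identity $\sum_i s_i=k$ and the pigeonhole observation that removing $s_i+1$ vertices from $U_i$ starves $V_i$.

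As for comparison: the paper does \emph{not} give its own proof of this theorem. It is quoted from Li and Nie and stated without argument, so there is nothing to compare against directly. That said, the paper does later prove strictly stronger connectivity bounds (Theorems~\ref{kappaV}--\ref{kappa}), and the technique used there---via the Hall-type characterisation of Theorem~\ref{LMMR}, namely $|N_G(V')|\ge|V'|+k$ for every nonempty $V'\subseteq V$---would yield a somewhat shorter proof of the present statement: applying it to each component's $V$-part $V_i\neq\emptyset$ gives $|U_i|\ge|N_G(V_i)|\ge|V_i|+k$, and summing forces $n\ge m+pk$ where $p$ is the number of components meeting $V$, hence $p\le 1$; an isolated $U$-component would then push $n>m+k$, also impossible. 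Your approach is genuinely different in that it works straight from the definition of $k$-critical-bipartite (exhibiting an explicit bad deletion) rather than through the neighbourhood inequality; this buys self-containment at the price of the extra case analysis for the ``all surplus in one component'' situation, which you handle cleanly.
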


On the other hand, Laroche et al. \cite{LMMR} gave a Hall-style characterization of $k$-critical-bipartite graphs as follows:

\begin{thm}[\cite{LMMR}]\label{LMMR} Let $G=(U, V;E)$ be a bipartite graph such that $k=|U|-|V|> 0$. The graph $G$ is $k$-critical-bipartite if and only if $|N(V')|\geq |V'|+k$ for all $\emptyset \neq V'\subseteq V$.
\end{thm}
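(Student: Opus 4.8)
The plan is to peel back the definition of $k$-critical-bipartiteness into a family of perfect-matching conditions and then apply Hall's marriage theorem to each member of the family, the only delicate point being the interchange of the two universal quantifiers. First I would record that, by definition, $G$ is $k$-critical-bipartite if and only if $G - S$ has a perfect matching for every $S \subseteq U$ with $|S| = k$. Since $|U \setminus S| = n - k = m = |V|$, each such $G - S$ is a balanced bipartite graph, so a matching saturating $V$ is automatically perfect; by Hall's theorem, $G - S$ therefore has a perfect matching if and only if $|N_{G-S}(V')| \ge |V'|$ for every $V' \subseteq V$. Observing that $N_{G-S}(V') = N_G(V') \setminus S$, I obtain the reformulation: $G$ is $k$-critical-bipartite if and only if $|N_G(V') \setminus S| \ge |V'|$ for all $S \subseteq U$ with $|S| = k$ and all $V' \subseteq V$.

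The core of the argument is to trade the quantifier over $S$ for a worst-case choice and thereby isolate a condition on $V'$ alone. For the implication from the Hall-type condition to $k$-critical-bipartiteness I expect no difficulty: assuming $|N_G(V')| \ge |V'| + k$ for all nonempty $V'$, then for any $k$-set $S$ we have $|N_G(V') \setminus S| \ge |N_G(V')| - |S| \ge (|V'| + k) - k = |V'|$, so Hall's condition holds in every $G - S$ and each has a perfect matching. The content lies in the converse, which is where I would spend most of the effort: given a nonempty $V'$ with $|N_G(V')| < |V'| + k$, I would construct a single deletion set $S$ witnessing the failure of a perfect matching. Because $|U| = n > k$, I can pick $S$ of size $k$ maximizing $|S \cap N_G(V')|$, so that $|S \cap N_G(V')| = \min\bigl(k,\,|N_G(V')|\bigr)$ and hence $|N_G(V') \setminus S| = \max\bigl(0,\,|N_G(V')| - k\bigr)$. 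The inequality $|N_G(V')| < |V'| + k$ then forces $|N_G(V') \setminus S| < |V'|$, so Hall's condition fails in $G - S$ and $G$ is not $k$-critical-bipartite.

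The step I expect to be the main obstacle is precisely this worst-case construction of $S$, and in particular handling the degenerate regime $|N_G(V')| < k$, where $S$ can absorb all neighbors of $V'$ and still have room to spare; here one must check that there are enough vertices in $U$ outside $N_G(V')$ to fill $S$ out to size exactly $k$, which follows from $n > k$. Once the worst-case analysis is set up, the equivalence $\max(0, |N_G(V')| - k) \ge |V'| \iff |N_G(V')| \ge |V'| + k$, valid because $V'$ is nonempty so that $|V'| \ge 1 > 0$, closes the argument. As an alternative route I could instead invoke \Cref{th:tilde} to pass to the balanced graph $\tilde{G}$ and apply the $k$-extendability characterization of \Cref{thm:extendability_ft}; however, that naturally yields a Hall-type condition on one color class of $\tilde{G}$, and extracting from it the stated condition on $V$ requires a symmetry argument together with bookkeeping for the auxiliary set $D$, so the direct Hall approach above seems cleaner and self-contained.
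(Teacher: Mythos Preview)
The paper does not give its own proof of this theorem; it is quoted as a result of Laroche et al.\ \cite{LMMR} and used as a black box throughout. So there is nothing in the paper to compare your argument against.

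That said, your proof is correct and is precisely the natural direct argument: reduce to deletions of size exactly $k$, apply Hall's theorem to each balanced $G-S$, and then swap the quantifiers by choosing, for each offending $V'$, a $k$-subset $S$ of $U$ that absorbs as much of $N_G(V')$ as possible. Your handling of the degenerate case $|N_G(V')|<k$ is fine; the only thing to note is that the inequality you need to fill $S$ out to size $k$ is $|U|\ge k$, not the strict $|U|>k$ you wrote, and this follows immediately from $|U|=m+k$ with $m\ge 1$ (which is implicit once a nonempty $V'$ exists). The alternative route via \Cref{th:tilde} and \Cref{thm:extendability_ft} that you mention would also work but, as you observe, involves an unnecessary detour through $\tilde{G}$.
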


Note that a $k$-critical-bipartite graph needs to have at least $(k+1)m$ edges. Indeed, suppose that the total number of edges is smaller. Then at least one vertex $v$ in $V$ is connected to at most $k$ distinct vertices in $U$. And there is a fault scenario where precisely the vertices in the neighborhood of $v$ are removed, in which case $v$ cannot be matched. A contradiction.

As Zhang et al.~\cite{ZZ12} for $k$-extendable bipartite graphs and Cichacz and Suchan~\cite{CS} for $k$-critical-bipartite biregular graphs, we want to study topologies where not only the total number of links is low, but also the maximum number of links per node is small (in both color classes). Thus, for given positive integer values $n,m$ such that $n>m>1$ and $k=n-m$, we want to find a bipartite graph $G=(U, V; E)$ of order $(n,m)$ that is a $k$-critical-bipartite graph and is lexicographically minimum with respect to $(|E|, \Delta_U, \Delta_V)$ (see Definition \ref{MkCBG}).

The construction below is a generalization of the construction from \cite{CS}. Indeed, the construction was used only for integers $m,n$ such that $n>m>1$ and $a = \frac{(k+1)m}{n}$ is an integer. In the construction and throughout the paper we use the following notation: $[o]=\{0,1,\ldots,o-1\}$ for any positive integer $o$.

% for an integer $b=\frac{n(n-m+1)}{m}$.
\begin{cons}\label{cons1}
Let $n, m, a$ be positive integers such that $n>m>1$. Let $\widehat{G}_{n,m}^a=(U,V; E)$ be a bipartite graph with $U=\{u_i \mid i \in [n]\}$, $V=\{v_j \mid j \in [m]\}$, and
$$E = \left\{ (u_i, v_{(j+\alpha) \bmod{m}}) \mid i \in [n], \alpha \in [a], j=\ceil{\frac{i m}{n}}\right\}.$$
\end{cons}

Cichacz and Suchan~\cite{CS} proved that, if $n, m$ and $a$ are positive integers such that $n>m>1$, $k=n-m$, and $a n = (k+1) m$, then the graph $\widehat{G}_{n,m}^a=(U,V; E)$ is a $(a, k+1)$-regular $k$-critical-bipartite graph of size $(k+1) m$ that is M$k$CBG-$(n,m)$. Moreover, they stated the following conjecture for irregular $k$-critical-bipartite graphs.

\begin{conj}[\cite{CS}]
Let $n,m$ be positive integers such that $n>m>1$,  Let $k=n-m$ and $a=\frac{m (k+1)}{n}$ is not an integer. Then $\widehat{G}_{n,m}^{\ceil{a}}$ obtained by Construction \ref{cons1} is $k$-critical-bipartite.\label{conjCS}
\end{conj}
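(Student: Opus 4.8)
The plan is to verify the Hall-style criterion of Theorem \ref{LMMR}. Write $a^\star=\ceil{a}$ for the common interval length; by Construction \ref{cons1} the neighbourhood $N(u_i)$ is the cyclic interval $I_i=\{s_i,s_i+1,\ldots,s_i+a^\star-1\}\pmod m$ of length $a^\star$, with start $s_i=\ceil{im/n}\bmod m$. I must show $|N(V')|\geq |V'|+k$ for every nonempty $V'\subseteq V$. It is convenient to pass to complements: setting $V''=V\setminus V'$ and using $|N(V')|=n-|\{i\in[n]:N(u_i)\cap V'=\emptyset\}|$, the inequality becomes, after substituting $k=n-m$ and $|V'|=m-|V''|$, equivalent to
\[
f(V'') := \bigl|\{\, i \in [n] : N(u_i) \subseteq V'' \,\}\bigr| \leq |V''|.
\]
Thus it suffices to prove $f(V'')\leq |V''|$ for every \emph{proper} subset $V''\subsetneq V$; the excluded case $V''=V$ corresponds to $V'=\emptyset$.

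Next I would reduce to a single block. Viewing $[m]$ as arranged cyclically, decompose $V''$ into its maximal runs of consecutive positions $R_1,\ldots,R_p$; since $V''$ is proper, each run has length $\ell_q:=|R_q|\leq m-1$. A cyclic interval $I_i$ of length $a^\star$ lies in $V''$ if and only if it fits inside a single run, so $f(V'')=\sum_q f(R_q)$, where $f(R_q)=|\{i:I_i\subseteq R_q\}|$ counts the $u_i$ whose start $s_i$ lies in the window $W_q$ formed by the first $\ell_q-a^\star+1$ positions of $R_q$ (and $f(R_q)=0$ when $\ell_q<a^\star$). Hence it is enough to establish the per-run bound $f(R_q)\leq \ell_q$, since summing over $q$ then gives $f(V'')\leq\sum_q\ell_q=|V''|$.

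The heart of the argument is a counting lemma for the starts: for every cyclic window $W$ of $w$ consecutive positions, $|\{i\in[n]:s_i\in W\}|\leq \ceil{wn/m}$. I would prove this by writing the count for a window beginning at position $p$ as $\floor{(p+w-1)n/m}-\floor{(p-1)n/m}$, using that $\ceil{im/n}\in\{p,\ldots,p+w-1\}$ iff $(p-1)n/m<i\leq (p+w-1)n/m$, and then applying $\floor{A}-\floor{B}\leq\ceil{A-B}$. The delicate point, and what I expect to be the main obstacle, is the seam: when $W$ wraps around position $0$ the residue $s_i=0$ is produced both by $i=0$ (where $\ceil{im/n}=0$) and by the indices with $\ceil{im/n}=m$, so the naive identification of $s_i$ with $\ceil{im/n}$ fails. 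I would handle this by counting the two parts $s_i\in\{p,\ldots,m-1\}$ and $s_i\in\{0,\ldots,q\}$ separately; a short computation shows the total collapses to exactly the same expression $\floor{(p+w-1)n/m}-\floor{(p-1)n/m}$, so the bound $\ceil{wn/m}$ holds uniformly for all cyclic windows, wrapping or not.

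It remains to combine the estimates. Applying the lemma to $W_q$, of size $w=\ell_q-a^\star+1\leq m-a^\star$, and using $n/m=1+k/m$ with $w$ an integer, gives $f(R_q)\leq \ceil{wn/m}=w+\ceil{wk/m}$. Hence $f(R_q)\leq \ell_q=w+a^\star-1$ follows once $\ceil{wk/m}\leq a^\star-1$; as the left side is non-decreasing in $w$, it suffices to check $w=m-a^\star$, where $\ceil{(m-a^\star)k/m}=k-\floor{a^\star k/m}$, so the required inequality reduces to $\floor{a^\star k/m}\geq k-a^\star+1$. Since the right-hand side is an integer this is equivalent to $a^\star(k+m)/m\geq k+1$, i.e. $a^\star\geq m(k+1)/n$, which holds by the very definition $a^\star=\ceil{a}=\ceil{m(k+1)/n}$. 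This closes the per-run bound and hence the whole proof. I would finally note that the argument uses only $a^\star\geq m(k+1)/n$, so it applies verbatim in the integral case and recovers the regularity result of \cite{CS}.
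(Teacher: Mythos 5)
Your proof is correct, but it takes a genuinely different route from the paper's. The paper never verifies the Hall condition of Theorem \ref{LMMR} on $\widehat{G}_{n,m}^{\ceil{a}}$ directly: it introduces a sparser graph $\overline{G}_{n,m}$ (Construction \ref{cons2}), in which each $v_j$ sees the $k+1$ cyclically consecutive vertices $u_{\floor{jn/m}-k},\ldots,u_{\floor{jn/m}}$, proves that $\overline{G}_{n,m}$ is $k$-critical-bipartite (Theorem \ref{positive}) by induction on $|S|$ --- deleting a vertex of $S$ with a private neighbour when one exists, and otherwise deriving $N(S)=U$ via a structural contradiction --- and then shows that $\overline{G}_{n,m}$ is a spanning subgraph of $\widehat{G}_{n,m}^{\ceil{a}}$ (Lemma \ref{subgraph}), so the conjecture follows because the condition $|N(V')|\geq |V'|+k$ is monotone under edge addition. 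You instead work with $\widehat{G}_{n,m}^{\ceil{a}}$ itself: complementing the Hall condition to $f(V'')\leq |V''|$, splitting $V''$ into cyclic runs, and bounding the number of interval starts in any cyclic window of size $w$ by $\ceil{wn/m}$ --- a counting statement that in essence re-derives the paper's Lemma \ref{number_of_i_1}, with correct handling of the wrap-around at position $0$ --- before closing with the arithmetic reduction to $a^\star\geq m(k+1)/n$, which I checked and which is sound. What the paper's detour buys is its main theorem: $\overline{G}_{n,m}$ has only $(k+1)m$ edges and is a \emph{minimum} $k$-critical-bipartite graph, with Corollary \ref{corCS} as a free byproduct; your argument proves the conjecture only, since $\widehat{G}_{n,m}^{\ceil{a}}$ has $n\ceil{a}\geq(k+1)m$ edges and is generally not minimum. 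What your route buys is self-containedness and generality: it needs neither Construction \ref{cons2} nor Lemma \ref{subgraph}, and since it uses only the inequality $a^\star\geq m(k+1)/n$, it applies verbatim to any interval length at least $a$, in particular recovering the biregular case of \cite{CS}.
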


In this paper, we prove that the conjecture is true. Moreover, for any pair $n, m$ of positive integers such that $n>m>1$, $k=n-m$, we construct a bipartite graph $G=(U, V; E)$ of order $(n,m)$ that is $k$-critical-bipartite and is lexicographically minimum among all such graphs with respect to $(|E|, \Delta_U, \Delta_V)$. In other words, we solve the problem of finding a Minimum $k$-Critical Bipartite Graph of order $(n,m)$ (M$k$CBG-$(n,m)$) completely.

\section{Positive Construction}\label{sec:pos_construction}
In this section, we give a construction that yields Minimum $k$-Critical Bipartite Graphs of order $(n,m)$. 

Let us start by recalling the following lemma that was proved in \cite{CS}.

\begin{lem}[Lemma 3.4 in \cite{CS}]\label{number_of_i_1}
Let $x$, $y$, $c$ be positive integers such that $x>y$. Let $n=cx$, $m=cy$, and $j \in [m]$. Then the number of integer solutions to $\ceil{\frac{im}{n}} \equiv j \pmod{m}$ with respect to $i$, with $i \in [n]$, is equal to $\floor{j\frac{x}{y}} - \floor{(j-1)\frac{x}{y}}$. Moreover:
\begin{itemize}
	\item $\floor{j\frac{x}{y}} - \floor{(j-1)\frac{x}{y}} = \floor{r\frac{x}{y}} - \floor{(r-1)\frac{x}{y}}$,
where $r = j \bmod y$.

\item For any interval of consecutive $y$ values of $j$, for $(x \bmod y)$ of them, there are $\ceil{x/y}$ solutions and, for the remaining $(y - x \bmod y)$, there are $\floor{x/y}$ solutions.

\item In general, the number of solutions is $\ceil{\frac{x}{y}}$ for $(n \bmod m)$, and $\floor{\frac{x}{y}}$ for $(m - n \bmod m)$ values of $j\in[m]$.
\end{itemize}
\end{lem}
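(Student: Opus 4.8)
The plan is to reduce the modular counting problem to counting integer points in a half-open interval, and then to track carefully the boundary behaviour at the two ends of the range $[n]$. Observe first that $n=cx$ and $m=cy$ give $\frac{m}{n}=\frac{y}{x}$ and $\frac{n}{m}=\frac{x}{y}$, so that $\ceil{\frac{im}{n}}=\ceil{\frac{iy}{x}}$; write $f(i)$ for this value. For a fixed integer $\ell$, the equation $f(i)=\ell$ is equivalent to $\frac{(\ell-1)n}{m}<i\le\frac{\ell n}{m}$, and the number of integers in this half-open interval is $\floor{\frac{\ell n}{m}}-\floor{\frac{(\ell-1)n}{m}}=\floor{\ell\frac{x}{y}}-\floor{(\ell-1)\frac{x}{y}}$. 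This already matches the target expression with $\ell=j$, so it remains to show that the modular count over $i\in[n]$ collapses to the single value $\ell=j$.

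To that end I would first determine the range of $f$ on $[n]$. Since $0<\frac{m}{n}<1$, we have $f(0)=0$ and $f(n-1)=\ceil{m-\frac{m}{n}}=m$, and $f$ is nondecreasing, so $f([n])\subseteq\{0,1,\dots,m\}$; hence only values $\ell\in\{0,\dots,m\}$ can contribute to the count. For a residue $j$ with $1\le j\le m-1$, the only such $\ell\equiv j\pmod m$ is $\ell=j$ itself, and one checks that every integer solution of $f(i)=j$ already lies in $\{1,\dots,n-1\}\subseteq[n]$, because $\frac{(j-1)n}{m}\ge 0$ and $\frac{jn}{m}<n$. In this case the count over $[n]$ therefore equals the interval count computed above, as claimed.

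The delicate case, and the step I expect to be the main obstacle, is $j=0$, where two values $\ell\in\{0,m\}$ satisfy $\ell\equiv 0\pmod m$. Here the interval count for $\ell=m$ includes the point $i=n$, which is not in $[n]$, so one loses a solution at the right end; but the interval count for $\ell=0$ contributes exactly $i=0$ at the left end. I would verify that these two boundary effects cancel precisely, so that the total count for $j=0$ again equals $\floor{0\cdot\frac{x}{y}}-\floor{-\frac{x}{y}}=\ceil{\frac{x}{y}}$, in agreement with the formula. Making this cancellation rigorous, rather than glossing over it, is where care is needed.

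For the three ``moreover'' items I would argue as follows. The shift identity $\floor{(j+y)\frac{x}{y}}=\floor{j\frac{x}{y}}+x$ shows that $\floor{j\frac{x}{y}}-\floor{(j-1)\frac{x}{y}}$ is periodic in $j$ with period $y$, hence depends only on $r=j\bmod y$, which is the first bullet. Each such difference is either $\floor{\frac{x}{y}}$ or $\ceil{\frac{x}{y}}$, and summing over any $y$ consecutive values of $j$ telescopes to $x$; solving $p\ceil{\frac{x}{y}}+(y-p)\floor{\frac{x}{y}}=x$ then forces $p=x\bmod y$, giving the second bullet. Finally, since $[m]$ splits into $c$ blocks of $y$ consecutive values and $n\bmod m=c(x\bmod y)$, multiplying the per-block counts by $c$ yields the third bullet.
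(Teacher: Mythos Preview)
Your argument is correct, but note that the present paper does not actually prove this lemma: it is quoted as Lemma~3.4 of \cite{CS} and introduced with ``Let us start by recalling the following lemma that was proved in \cite{CS}.'' There is therefore no proof in this paper to compare against.

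On the substance, your approach is the natural one and the details check out. The reduction of $\ceil{im/n}=\ell$ to counting integers in $\bigl((\ell-1)\tfrac{n}{m},\,\ell\tfrac{n}{m}\bigr]$ is standard, and your treatment of the boundary case $j=0$ is exactly right: the spurious point $i=n$ coming from $\ell=m$ is offset by the genuine point $i=0$ coming from $\ell=0$, and one indeed recovers $\ceil{x/y}$ in both the integer and non-integer cases of $x/y$. For the three bullets, the periodicity via $\floor{(j+y)x/y}=\floor{jx/y}+x$, the telescoping sum over any window of $y$ consecutive values, and the identity $n\bmod m=c(x\bmod y)$ all work as you describe. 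The one assertion you leave unjustified is that each difference $\floor{jx/y}-\floor{(j-1)x/y}$ lies in $\{\floor{x/y},\ceil{x/y}\}$; this follows immediately from the fact that a half-open interval of length $x/y$ contains either $\floor{x/y}$ or $\ceil{x/y}$ integers, and is worth stating explicitly.
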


With Lemma \ref{number_of_i_1}, we can prove the following lemma that allows us to further analyze the neighborhoods of vertices in graphs like the ones from Construction \ref{cons1}. 

\begin{lem}\label{lem:remove_max}
Let $n, m$ be positive integers such that $n>m>1$. Let $j \in [m]$. Then
$$\max\left\{i \mid i \in [n], \left\lceil  \frac{im}{n}\right\rceil \equiv j\pmod{m}\right\} = \floor{\frac{j n}{m}}.$$

\end{lem}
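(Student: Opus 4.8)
The plan is to reduce the modular condition to an ordinary equality between reals and then read the largest admissible index off a half-open interval. First I would record the range of the map $i \mapsto \ceil{im/n}$: for $i\in[n]$ it is non-decreasing and takes values in $\{0,1,\dots,m\}$, since $\ceil{0}=0$ and, using $0<m/n<1$, $\ceil{(n-1)m/n}=m$. The workhorse is the elementary equivalence, valid for every integer $t$,
$$\ceil{\frac{im}{n}}=t \iff (t-1)\frac{n}{m} < i \le t\frac{n}{m},$$
which is immediate from the definition of the ceiling. The largest integer $i$ meeting the right-hand side is exactly $\floor{t\frac{n}{m}}$, and this value genuinely lies in the interval: since the interval has length $n/m>1$ (because $n>m$), we get $\floor{t\frac{n}{m}} > t\frac{n}{m}-1 \ge (t-1)\frac{n}{m}$, so the half-open interval is nonempty and its top integer is $\floor{t\frac{n}{m}}$.

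Next I would argue that, for the target residue $j$, the congruence $\ceil{im/n}\equiv j \pmod m$ collapses to the equality $\ceil{im/n}=j$. Because the attainable values all lie in $\{0,1,\dots,m\}$, the only representatives of the class of $j$ modulo $m$ inside this set are $j$ itself together with, when $j\equiv 0$, the extra endpoints $0$ and $m$. For $1\le j\le m-1$ the value $j$ is the unique representative, so the congruence reduces to the equality and the first paragraph yields $\max\{i\in[n] : \ceil{im/n}=j\}=\floor{jn/m}$. Membership is then easy to confirm: for $j\le m-1$ we have $jn/m\le (m-1)n/m = n-n/m<n$, hence $\floor{jn/m}\le n-1$, so the maximizer does lie in $[n]$. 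To be fully explicit one also checks that for every $i>\floor{jn/m}$ the ceiling takes a value in $\{j+1,\dots,m\}$, whose residues modulo $m$ are $\{j+1,\dots,m-1,0\}$ and thus avoid $j$, pinning the maximum at $\floor{jn/m}$.

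The main obstacle is exactly the wrap-around at the terminal value $m$. For $i$ near $n-1$ the ceiling attains $m\equiv 0 \pmod m$, so the residue class $j\equiv 0$ is realised both near $i=0$ and near $i=n-1$; the clean interval computation therefore needs separate bookkeeping at $j\equiv 0$, and this boundary case is where I would spend the real care in the write-up. I expect the rest to be routine ceiling/floor manipulation anchored on the single equivalence above, with Lemma~\ref{number_of_i_1} available as an independent cross-check of the count of contributing indices if a sanity verification is wanted.
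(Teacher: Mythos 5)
Your interval argument settles every case $1 \le j \le m-1$ correctly and completely: the equivalence $\ceil{\frac{im}{n}}=t \iff (t-1)\frac{n}{m} < i \le t\frac{n}{m}$, the observation that $\floor{t\frac{n}{m}}$ is the largest integer in a nonempty interval (its length $\frac{n}{m}$ exceeds $1$), the range check $\floor{\frac{jn}{m}}\le n-1$, and your closing monotonicity remark that all $i>\floor{\frac{jn}{m}}$ give values in $\{j+1,\dots,m\}$ whose residues avoid $j$. This is a more direct route than the paper's, which instead defines $i_j$ as the maximum in question, claims the recursion $i_j = i_{j-1} + \#\{i \in [n] : \ceil{\frac{im}{n}}\equiv j \pmod m\}$, evaluates that count via Lemma~\ref{number_of_i_1}, and telescopes to $\floor{\frac{jn}{m}}$.

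However, the case $j=0$ that you defer as ``separate bookkeeping'' is not a bookkeeping issue: the lemma as literally stated is false there, and your own first paragraph already contains the counterexample. Since $0<\frac{m}{n}<1$, the index $i=n-1$ satisfies $\ceil{\frac{(n-1)m}{n}}=m\equiv 0\pmod m$, so for $j=0$ the left-hand side equals $n-1$ while the right-hand side is $\floor{\frac{0\cdot n}{m}}=0$; no amount of care in the write-up can close this. The correct repair is to restrict the statement to $j\in\{1,\dots,m-1\}$ (equivalently, to replace the congruence by the equality $\ceil{\frac{im}{n}}=j$) and to handle $j=0$ separately by decree, which is also what the application demands: the rewriting of the edge set of Construction~\ref{cons1} needs $i=\floor{\frac{jn}{m}}=0$ at $j=0$, not $i=n-1$. (Check $n=6$, $m=3$, $a=2$: the max-based rule would give $N(v_0)=\{u_2,u_3,u_4,u_5\}$, whereas Construction~\ref{cons1} gives $N(v_0)=\{u_0,u_3,u_4,u_5\}$, which is what $i=0$ produces.) You should also know that the paper's own proof has exactly this hole: it opens by asserting $i_0=0=\floor{\frac{0\cdot n}{m}}$, which silently ignores the value $m$ attained at $i=n-1$ and is false under the literal reading of the statement. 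So your instinct that the wrap-around at $m$ is the real obstacle is sound; the missing step was to recognize it as a genuine counterexample forcing a restatement, rather than a case awaiting a patch.
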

\begin{proof}
Let us write $i_j=\max\left\{i \mid i \in [n], \left\lceil  \frac{im}{n}\right\rceil \equiv j \pmod{m}\right\}$.

We have $i_0=0=\floor{\frac{0 \cdot n}{m}}$. For $j>0$, it is easy to check that $i_j$ is equal to $i_{j-1}$ plus the number of integer solutions to $\ceil{\frac{im}{n}} \equiv j \pmod{m}$. By Lemma~\ref{number_of_i_1}, the number of integer solutions to $\ceil{\frac{im}{n}} \equiv j \pmod{m}$, with respect to $i$, with $i \in [n]$, is equal to $\floor{j\frac{n}{m}} - \floor{(j-1)\frac{n}{m}}$. Therefore, we have $i_j=i_{j-1}+\floor{j\frac{n}{m}} - \floor{(j-1)\frac{n}{m}}$. By the telescopic property, we have $i_j=\sum_{i=1}^{j}\left(\floor{i\frac{n}{m}} - \floor{(i-1)\frac{n}{m}}\right)=\floor{j\frac{n}{m}}$ for $j>0$. 
\end{proof}

It is easy to check that the edge set of the graph from Construction \ref{cons1}, in the case where $a n = (k+1) m$, can also be written as:

$$E = \left\{ (u_{(i-\beta) \bmod{n}}, v_{j}) \mid j \in [m], \beta \in [k+1], i=\max\left\{i \mid i \in [n], \left\lceil  \frac{im}{n}\right\rceil\equiv j\pmod{m}\right\} \right\}.$$

So, by Lemma~\ref{lem:remove_max}, when $a = \frac{m(k+1)}{n}$ is an integer, the edge set of the graph from Construction \ref{cons1} can be described in the following way:

$$E = \left\{ (u_{(i-\beta) \bmod{n}}, v_{j}) \mid j \in [m], \beta \in [k+1], i= \floor{\frac{j n}{m}}\right\}.$$

Let us generalize this construction to the case where $a = \frac{m(k+1)}{n}$ is not an integer.

\begin{cons}\label{cons2}
Let $n, m$ be positive integers such that $n>m>1$. Let $k=n-m$. Let $\overline{G}_{n,m}=(U,V; E)$ be a bipartite graph with $U=\{u_i \mid i \in [n]\}$, $V=\{v_j \mid j \in [m]\}$, and
$$E = \left\{ (u_{(i-\beta) \bmod{n}}, v_{j}) \mid j \in [m], \beta \in [k+1], i= \floor{\frac{j n}{m}}\right\}.$$
\end{cons}

\begin{figure}[ht!]
\begin{center}
\begin{tikzpicture}[scale=1,style=thick,x=1cm,y=1cm]
\def\vr{3pt} % \vr = vertex radius;

% $\overline{G}_{6,5}$
% define vertices
%%%%%
%%%%%
%\path (0,0) coordinate (u7);
\path (0,1) coordinate (u6);
\path (0,2) coordinate (u5);
\path (0,3) coordinate (u4);
\path (0,4) coordinate (u3);
\path (0,5) coordinate (u2);
\path (0,6) coordinate (u1);
%\path (2,1) coordinate (v6);
\path (2,2) coordinate (v5);
\path (2,3) coordinate (v4);
\path (2,4) coordinate (v3);
\path (2,5) coordinate (v2);
\path (2,6) coordinate (v1);

%  edges
\draw (u6) -- (v1) -- (u1) -- (v2) -- (u2) -- (v3) -- (u3) -- (v4) -- (u4) -- (v5) -- (u5);

\draw (u1) [fill=white] circle (\vr);
\draw (u2) [fill=white] circle (\vr);
\draw (u3) [fill=white] circle (\vr);
\draw (u4) [fill=white] circle (\vr);
\draw (u5) [fill=white] circle (\vr);
\draw (u6) [fill=white] circle (\vr);
\draw (v1) [fill=white] circle (\vr);
\draw (v2) [fill=white] circle (\vr);
\draw (v3) [fill=white] circle (\vr);
\draw (v4) [fill=white] circle (\vr);
\draw (v5) [fill=white] circle (\vr);

\draw[anchor = east] (u6) node {$u_5$};
\draw[anchor = east] (u5) node {$u_4$};
\draw[anchor = east] (u4) node {$u_3$};
\draw[anchor = east] (u3) node {$u_2$};
\draw[anchor = east] (u2) node {$u_1$};
\draw[anchor = east] (u1) node {$u_0$};
\draw[anchor = west] (v5) node {$v_4$};
\draw[anchor = west] (v4) node {$v_3$};
\draw[anchor = west] (v3) node {$v_2$};
\draw[anchor = west] (v2) node {$v_1$};
\draw[anchor = west] (v1) node {$v_0$};

% $\widehat{G}_{6,5}^{2}$
% define vertices
%%%%%
%%%%%
%\path (0,0) coordinate (u7);
\path (5,1) coordinate (1u6);
\path (5,2) coordinate (1u5);
\path (5,3) coordinate (1u4);
\path (5,4) coordinate (1u3);
\path (5,5) coordinate (1u2);
\path (5,6) coordinate (1u1);
%\path (2,1) coordinate (v6);
\path (7,2) coordinate (1v5);
\path (7,3) coordinate (1v4);
\path (7,4) coordinate (1v3);
\path (7,5) coordinate (1v2);
\path (7,6) coordinate (1v1);

%  edges
\draw (1u6) -- (1v1) -- (1u1) -- (1v2) -- (1u2) -- (1v3) -- (1u3) -- (1v4) -- (1u4) -- (1v5) -- (1u5);
\draw (1u5) -- (1v1);
\draw (1u6) -- (1v2);

\draw (1u1) [fill=white] circle (\vr);
\draw (1u2) [fill=white] circle (\vr);
\draw (1u3) [fill=white] circle (\vr);
\draw (1u4) [fill=white] circle (\vr);
\draw (1u5) [fill=white] circle (\vr);
\draw (1u6) [fill=white] circle (\vr);
\draw (1v1) [fill=white] circle (\vr);
\draw (1v2) [fill=white] circle (\vr);
\draw (1v3) [fill=white] circle (\vr);
\draw (1v4) [fill=white] circle (\vr);
\draw (1v5) [fill=white] circle (\vr);

\draw[anchor = east] (1u6) node {$u_5$};
\draw[anchor = east] (1u5) node {$u_4$};
\draw[anchor = east] (1u4) node {$u_3$};
\draw[anchor = east] (1u3) node {$u_2$};
\draw[anchor = east] (1u2) node {$u_1$};
\draw[anchor = east] (1u1) node {$u_0$};
\draw[anchor = west] (1v5) node {$v_4$};
\draw[anchor = west] (1v4) node {$v_3$};
\draw[anchor = west] (1v3) node {$v_2$};
\draw[anchor = west] (1v2) node {$v_1$};
\draw[anchor = west] (1v1) node {$v_0$};

%\draw(1,0.5) node %{$\overline{G}_{6,5}^2$};

\draw(1,0.5) node {$\overline{G}_{6,5}$};
\draw(6,0.5) node {$\widehat{G}_{6,5}^{2}$};

\end{tikzpicture}
% \end{center}
% \begin{center}
\caption{The graphs $\overline{G}_{6,5}$ and $\widehat{G}_{6,5}^{2}$}\label{newcons}
\end{center}
\end{figure}
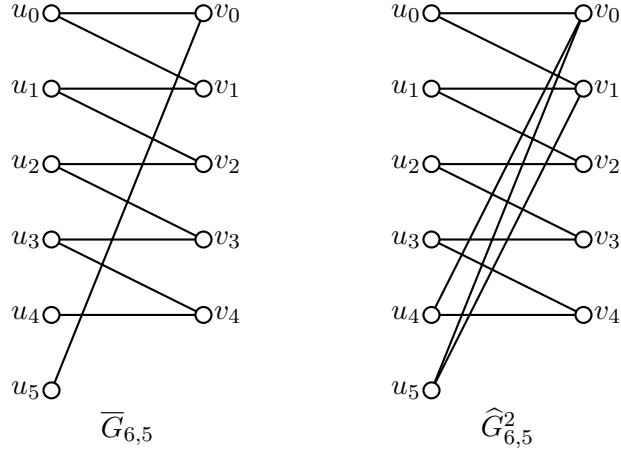

So the following observation holds.

\begin{obs}\label{obs_cons_eq}
If $a = \frac{m(k+1)}{n}$ is an integer then  $\overline{G}_{n,m}=\widehat{G}_{n,m}^{a}$.
\end{obs}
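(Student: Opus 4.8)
The plan is to show that $\overline{G}_{n,m}$ and $\widehat{G}_{n,m}^a$ have the same edge set; since both are bipartite with identical color classes $U=\{u_i\mid i\in[n]\}$ and $V=\{v_j\mid j\in[m]\}$, this yields $\overline{G}_{n,m}=\widehat{G}_{n,m}^a$. The whole argument is in fact contained in the two displayed reformulations preceding the statement, so the proof amounts to assembling them; I would nonetheless spell out the verification of the first (``easy to check'') reformulation, which is where the content lies.

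First I would pass from the vertex-by-vertex description of $\widehat{G}_{n,m}^a$ in Construction~\ref{cons1} to a description of the neighborhood of each $v_j$. By definition, $u_i$ is joined to $v_{(\ceil{im/n}+\alpha)\bmod m}$ for $\alpha\in[a]$, so $v_j\in N(u_i)$ if and only if $\ceil{im/n}\equiv j-\alpha\pmod m$ for some $\alpha\in[a]$; equivalently $N_{\widehat{G}_{n,m}^a}(v_j)=\{u_i\mid i\in[n],\ \ceil{im/n}\equiv j-\alpha\pmod m \text{ for some } \alpha\in[a]\}$. Writing $B_t=\{i\in[n]\mid \ceil{im/n}\equiv t\pmod m\}$, this says $N(v_j)$ is the union of the $a$ blocks $B_j, B_{j-1},\dots,B_{j-a+1}$ (indices taken mod $m$).

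The key step is to identify this union as a single contiguous arc of $k+1$ consecutive indices on the index set $[n]$ arranged in a cycle (with $0$ following $n-1$). As $i$ runs through $[n]$ the quantity $\ceil{im/n}$ is non-decreasing, so each $B_t$ is such an arc and consecutive residues give consecutive arcs; hence $B_j\cup\cdots\cup B_{j-a+1}$ is itself an arc. Its size is $\sum_{\alpha=0}^{a-1}\lvert B_{(j-\alpha)\bmod m}\rvert$, and by Lemma~\ref{number_of_i_1} (together with the periodicity $\lvert B_{t+m}\rvert=\lvert B_t\rvert$) the summands telescope to $\floor{jn/m}-\floor{(j-a)n/m}$; since $an=(k+1)m$ we have $\floor{(j-a)n/m}=\floor{jn/m}-(k+1)$, so the arc has exactly $k+1$ elements. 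Lemma~\ref{lem:remove_max} identifies its top endpoint as $\floor{jn/m}$. Therefore $N_{\widehat{G}_{n,m}^a}(v_j)=\{u_{(\floor{jn/m}-\beta)\bmod n}\mid \beta\in[k+1]\}$, which is precisely $N_{\overline{G}_{n,m}}(v_j)$ read off from Construction~\ref{cons2}. Ranging over $j\in[m]$ gives equality of the edge sets, hence of the graphs.

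The main obstacle is the bookkeeping in this last step: one must be careful that the blocks $B_t$ are genuinely contiguous on the cyclic index set (the block $B_0$ wraps around, containing both the smallest and the largest relevant indices), that the telescoping of the block sizes remains valid across this wraparound modulo $m$, and that $\floor{jn/m}$ is to be read as the top endpoint of the arc on the cyclic index set rather than as a naive set maximum. Once the cyclic structure is set up correctly, the hypothesis $an=(k+1)m$ does the rest, forcing each window of $a$ consecutive blocks to contain exactly $k+1$ vertices.
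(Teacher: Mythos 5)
Your proof is correct and follows essentially the same route as the paper: the paper regroups the edges of Construction~\ref{cons1} by the vertices $v_j$ (declaring it ``easy to check'' that each $N(v_j)$ is an arc of $k+1$ consecutive $U$-indices ending at $\max\{i \in [n] \mid \ceil{\frac{im}{n}} \equiv j \pmod{m}\}$) and then applies Lemma~\ref{lem:remove_max}, which is exactly your argument, with the omitted verification supplied via the block decomposition and the telescoping count from Lemma~\ref{number_of_i_1}. Your observation that $\floor{\frac{jn}{m}}$ must be read as the cyclic top endpoint of the arc (since the block of residue $0$ wraps around, containing both $i=0$ and $i=n-1$) is a genuine subtlety that the paper's statement of Lemma~\ref{lem:remove_max} glosses over, and you handle it correctly.
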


See Figure~\ref{newcons} for a small example of graphs $\widehat{G}_{n,m}^{\ceil{a}}$ and $\overline{G}_{n,m}$ in which $a = \frac{m(k+1)}{n}$ is not an integer. Let us present a lemma that relates the graphs obtained by Constructions \ref{cons1} and \ref{cons2} in general.

\begin{lem}\label{subgraph}
Let $n,m$ be positive integers such that $n>m>1$. Let $k=n-m$ and $a=\frac{m(k+1)}{n}$. Then the graph $\overline{G}_{n,m}=(U,V; \overline{E})$ obtained by Construction \ref{cons2} is a subgraph of the graph $\widehat{G}_{n,m}^{\ceil a}=(U,V; \widehat E)$ given in Construction \ref{cons1}.
\end{lem}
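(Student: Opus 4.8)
The plan is to prove the inclusion $\overline{E}\subseteq\widehat{E}$ edge by edge, working purely with the closed-form index descriptions of the two constructions. Throughout, write $f(i)=\ceil{\frac{im}{n}}$ for integers $i$ and $g(j)=\floor{\frac{jn}{m}}$ for $j\in[m]$, so that, by Construction~\ref{cons1} applied with parameter $\ceil{a}$, an edge $(u_i,v_\ell)$ lies in $\widehat{E}$ if and only if $\ell\equiv f(i)+\alpha\pmod{m}$ for some $\alpha\in[\ceil{a}]$, while the edges of $\overline{G}_{n,m}$ are exactly the pairs $\big(u_{(g(j)-\beta)\bmod n},v_j\big)$ with $j\in[m]$ and $\beta\in[k+1]$. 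Thus it suffices to exhibit, for every such $j$ and $\beta$, an offset $\alpha\in[\ceil{a}]$ with $j\equiv f\big((g(j)-\beta)\bmod n\big)+\alpha\pmod{m}$.

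The heart of the argument is one algebraic identity. Setting $\phi_j=j-\frac{g(j)m}{n}$, the definition of $g(j)$ gives $\phi_j=\frac{m}{n}\left\{\frac{jn}{m}\right\}\in\left[0,\frac{m}{n}\right)$ (here $\{x\}$ denotes the fractional part of $x$). Using $\ceil{-x}=-\floor{x}$, I would then compute, for every integer $\beta$,
\[
f(g(j)-\beta)=\ceil{\frac{(g(j)-\beta)m}{n}}=\ceil{\,j-\phi_j-\frac{\beta m}{n}\,}=j-\floor{\phi_j+\frac{\beta m}{n}}.
\]
Taking $\beta=0$ recovers $f(g(j))=j$, which is (modulo $m$) the anchor already provided by Lemma~\ref{lem:remove_max}. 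The point is that the displayed identity is an identity of integers valid for all $\beta$, whatever the sign of $g(j)-\beta$.

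It remains to pass from $g(j)-\beta$ to the genuine $U$-index $(g(j)-\beta)\bmod n$ and to check the range of the offset. Since $f(i+n)=f(i)+m$, replacing $g(j)-\beta$ by its residue modulo $n$ changes $f$ only by a multiple of $m$; hence $f\big((g(j)-\beta)\bmod n\big)\equiv j-\floor{\phi_j+\frac{\beta m}{n}}\pmod{m}$, so the required offset is $\alpha=\floor{\phi_j+\frac{\beta m}{n}}$. For $\beta\in[k+1]$ we have $0\le \phi_j+\frac{\beta m}{n}<\frac{m}{n}+\frac{km}{n}=\frac{(k+1)m}{n}=a\le\ceil{a}$, whence $\alpha\in\{0,1,\dots,\ceil{a}-1\}=[\ceil{a}]$, as needed. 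This shows $\big(u_{(g(j)-\beta)\bmod n},v_j\big)\in\widehat{E}$ for every edge of $\overline{G}_{n,m}$, i.e. $\overline{E}\subseteq\widehat{E}$.

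The one step demanding care --- and the only place where the cyclic nature of the constructions bites --- is the wraparound in the $U$-indices when $g(j)<\beta$: there the true neighbour $u_{(g(j)-\beta)\bmod n}$ sits near the top of $U$ rather than where the unreduced index $g(j)-\beta$ would suggest. I expect this to be the main obstacle to a naive ``interval'' argument, but it dissolves once one observes that membership in $\widehat{G}_{n,m}^{\ceil{a}}$ is governed only by a congruence modulo $m$ together with $f(i+n)=f(i)+m$, so the extra $+m$ produced by the wraparound is invisible. A minor bookkeeping check, that $a=\frac{m(k+1)}{n}<m$ (equivalently $\ceil{a}\le m$), guarantees that the $\ceil{a}$ offsets index distinct vertices of $V$, so the congruence characterization of $\widehat{E}$ is unambiguous.
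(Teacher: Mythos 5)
Your proposal is correct and takes essentially the same approach as the paper: your key identity $f(g(j)-\beta)=j-\floor{\phi_j+\frac{\beta m}{n}}$ is precisely the paper's computation $j_{(i_0-\beta_0)}=j_0-\floor{\frac{\gamma+\beta_0 m}{n}}$ (your $\phi_j$ is the paper's $\gamma/n$), and your range check $\phi_j+\frac{\beta m}{n}<a\le\ceil{a}$ matches the paper's estimate $\floor{\frac{\gamma+\beta_0 m}{n}}\le\ceil{a}-1$. The only difference is that you explicitly justify the mod-$n$ wraparound via $f(i+n)=f(i)+m$, a point the paper's proof leaves implicit when it applies $f$ to the unreduced index $i_0-\beta_0$.
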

\begin{proof}
Let us consider the graph $\overline{G}_{n,m}=(U,V; \overline{E})$ obtained by Construction \ref{cons2}. Let $e_0 \in \overline E$. Then
there is $j_0 \in [m]$ and $\beta_0 \in [k+1]$ such that 
$e_0=(u_{(i_0 - \beta_0) \bmod n},v_{j_0})$ for $i_0=\floor{\frac{j_0n}{m}}$. Let us show that $e_0$ also belongs to $\widehat E$.

By Construction \ref{cons1}, there is $\{(u_{(i_0 - \beta_0)},v_{(j_{(i_0-\beta_0)}+\alpha) \bmod{m} }), \alpha \in [\ceil a]\} \subset \widehat E$, where $j_{(i_0-\beta_0)}=\ceil{\frac{(i_0-\beta_0)m}{n}}$. Let us choose $\gamma \in [m]$ such that 
$$\frac{(i_0-\beta_0)m}{n}=\frac{(\floor{\frac{j_0n}{m}}-\beta_0)m}{n}=\frac{(\frac{j_0n-\gamma}{m}-\beta_0)m}{n}=j_0-\frac{\gamma +\beta_0 m}{n}.$$ 
So, there is $j_{(i_0-\beta_0)}=\ceil{j_0-\frac{\gamma +\beta_0 m}{n}}=j_0-\floor{\frac{\gamma +\beta_0 m}{n}}$. 

Note that 
$$\floor{\frac{\gamma +\beta_0 m}{n}} \leq \floor{\frac{m-1+km}{n}}=\floor{a-\frac{1}{n}}=\begin{cases} a-1 \; {\rm if} \; a \in \mathbb N \\  b, \; b \in \{\ceil a-2, \ceil a-1\} \; \rm{otherwise.} \end{cases}$$
So there exists $\alpha_0 \in [\ceil a]$ such that $j_0=j_{(i_0-\beta_0)}+\alpha_0$, and hence $$e_0=(u_{(i_0-\beta_0)},v_{j_0}) \in \{(u_{(i_0-\beta_0)},v_{(j_{(i_0-\beta_0)}+\alpha) \bmod m}), \alpha \in [ \ceil a]\} \subset \widehat E.$$

\end{proof}

\begin{thm}\label{positive} 
Let $n,m$ be positive integers such that $n>m>1$. Let $k=n-m$. Then $\overline{G}_{n,m}=(U,V; E)$ obtained by Construction \ref{cons2} is a minimum $k$-critical-bipartite graph of order $(n,m)$.
\end{thm}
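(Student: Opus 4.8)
The plan is to establish two things: that $\overline{G}_{n,m}$ is $k$-critical-bipartite, and that it attains the lexicographic minimum of $(|E|,\Delta_U,\Delta_V)$. For the second part I would compute directly from Construction~\ref{cons2}. Each vertex $v_j$ receives exactly $k+1$ edges (one for each $\beta \in [k+1]$, all distinct since the indices $(i-\beta)\bmod n$ are distinct), so $|E| = m(k+1) = m(n-m+1)$, which matches the edge lower bound $(k+1)m$ noted in Section~\ref{sec:main_problem}, and $\Delta_V = k+1 = n-m+1$. Hence no $k$-critical-bipartite graph can have fewer edges or, among those with $m(k+1)$ edges, a smaller $\Delta_V$ than $k+1$ (since every $v_j$ needs degree at least $k+1$). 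The degree of each $u_i$ in $U$ is the number of pairs $(j,\beta)$ with $(\floor{jn/m}-\beta)\bmod n = i$; summing to $m(k+1)$ over $n$ vertices forces $\Delta_U \ge \ceil{m(k+1)/n}$, and I would verify via Lemma~\ref{lem:remove_max} (which pins down $\floor{jn/m}$ as exactly the largest $U$-index attached to $v_j$ in the underlying pattern) that the construction spreads the edges over $U$ as evenly as possible, so that $\Delta_U = \ceil{m(n-m+1)/n}$. This gives the tuple claimed in the introduction, and the lower bounds above show it is lexicographically minimum.

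The substantive part is proving $k$-critical-bipartiteness. By Theorem~\ref{LMMR}, this is equivalent to showing $|N(V')| \ge |V'| + k$ for every nonempty $V' \subseteq V$. The natural route is to exploit the cyclic/interval structure of the construction: by Lemma~\ref{subgraph}, $\overline{G}_{n,m}$ is a subgraph of $\widehat{G}_{n,m}^{\ceil a}$, and since Conjecture~\ref{conjCS} asserts exactly that $\widehat{G}_{n,m}^{\ceil a}$ is $k$-critical-bipartite, one tempting strategy is to prove the conjecture first and then transfer. But the subgraph direction is the wrong way for a Hall condition (a subgraph has \emph{smaller} neighborhoods), so I cannot simply inherit the property from the supergraph. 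Instead I would work with $\overline{G}_{n,m}$ directly and verify Hall's condition for it; proving the conjecture for $\widehat{G}_{n,m}^{\ceil a}$ would then follow because its neighborhoods only grow.

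To verify $|N(V')| \ge |V'| + k$ directly, I would reduce to the worst case, which by the interval structure should be $V'$ a block of consecutive indices $\{v_j, v_{j+1}, \dots, v_{j+t-1}\}$ (indices mod $m$). For such a block, $N(V')$ is the union of the $U$-intervals $\{u_{(\floor{\ell n/m}-\beta)\bmod n} : \beta \in [k+1]\}$ over $\ell$ in the block. Using Lemma~\ref{lem:remove_max}, the top endpoints $\floor{\ell n/m}$ are strictly increasing and spaced by $\floor{\ell n/m}-\floor{(\ell-1)n/m}$, so consecutive $U$-intervals of length $k+1$ overlap in a controlled way; I would count the size of the union as roughly $\floor{(j+t-1)n/m} - \floor{(j-1)n/m} + k$ and show this is at least $t + k$. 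The inequality $\floor{(j+t-1)n/m}-\floor{(j-1)n/m} \ge t$ holds because $n/m > 1$, giving exactly the slack $+k$ from the extra $k+1$ vertices appended downward at the bottom endpoint. The main obstacle is handling the boundary/wraparound carefully and confirming that an arbitrary (non-consecutive) $V'$ cannot beat the consecutive case — i.e., justifying the reduction to intervals rigorously, since a union of several separated blocks gains extra neighborhood from each block's appended tail and so is only easier. I expect the interval reduction and the careful floor-function bookkeeping at the cyclic wraparound to be where the real work lies.
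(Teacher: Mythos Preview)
Your plan for the minimality part is sound and in fact more thorough than the paper's, which only argues that $|E|=(k+1)m$ is forced (leaving the verification of $\Delta_U=\ceil{m(k+1)/n}$ implicit). You are also right that the subgraph relation in Lemma~\ref{subgraph} points the wrong way for inheriting the Hall condition, so working directly with $\overline{G}_{n,m}$ is the correct move.

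Where your route diverges from the paper is in verifying $|N(S)|\ge|S|+k$. You propose to reduce to the case where $S$ is a single cyclic block in $V$, compute $|N(S)|$ for that block via the floor-function identities, and then argue that splitting $S$ into several blocks only helps. This can be made to work: for a single block the union of the $(k{+}1)$-intervals telescopes to one arc of the right size (your inequality $\floor{(j{+}t{-}1)n/m}-\floor{jn/m}\ge t-1$ follows from $n>m$ and $\floor{x}-\floor{y}\ge\floor{x-y}$), and for several blocks one can group them by overlap of their $U$-neighbourhoods and run the single-block estimate on each group. But the reduction and the cyclic wraparound do require the case analysis you anticipate.

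The paper sidesteps all of this with a short induction on $|S|$. If some $v\in S$ has a private neighbour in $N(S)$, drop $v$ and apply the hypothesis. If not, i.e.\ $N(v)\subseteq N(S\setminus\{v\})$ for every $v\in S$, then the paper shows $N(S)=U$: pick an index $i_0$ with $u_{i_0}\notin N(S)$ but $u_{(i_0+1)\bmod n}\in N(S)$; any $v_r\in S$ covering $u_{i_0+1}$ must have its entire $(k{+}1)$-interval start exactly at $i_0+1$, and the covering assumption produces a second $v_l\neq v_r$ with the same interval, contradicting injectivity of $j\mapsto\floor{jn/m}$. This two-case induction is shorter than the block decomposition and handles wraparound for free; your approach trades that elegance for a more explicit count of $|N(S)|$.
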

\begin{proof}
Let us show that, for any $S\neq\emptyset$, $S\subset V$, there is $|N_{\overline{G}_{n,m}}(S)|\geq |S|+k$, which, by Theorem~\ref{LMMR}, implies that $\overline{G}_{n,m}$ is $k$-critical. The proof is by induction on $|S|$. 

Let $|S|=1$. Let $v_j$, $j \in [m]$ be the vertex in $S$. By definition, $N_{\overline{G}_{n,m}}(v_j)=\{(u_{(i-\beta) \bmod{n}}, v_{j}) \mid i= \floor{\frac{j n}{m}}, \beta \in [k+1]\}$, so the conclusion is true. 

Given an integer $p$, $1 \leq p \leq m-1$, suppose that $|N_{\overline{G}_{n,m}}(S')|\geq p+k$ holds for any $S'$, $S' \subset V$, such that $|S'| = p$. 

Take any $S$, $S\subset V$, with $|S| = p + 1$. 
Suppose first that there exists $v \in S$ such that $N_{\overline{G}_{n,m}}(v) \setminus N_{\overline{G}_{n,m}}(S \setminus v) \neq \emptyset$ and hence $|N_{\overline{G}_{n,m}}(v) \cap N_{\overline{G}_{n,m}}(S \setminus \{v\})|< \deg_{\overline{G}_{n,m}}(v)$. Let $S'= S \setminus\{v\}$. Then $|S'| = p$ and, by the induction hypothesis, $|N_{\overline{G}_{n,m}}(S')|\geq p+k$. Hence, $$|N_{\overline{G}_{n,m}}(S)|\geq p+k +|N_{\overline{G}_{n,m}}(v)\setminus N_{\overline{G}_{n,m}}(S')|\geq p+k+1=|S|+k.$$

Assume now that $N_{\overline{G}_{n,m}}(v) \subset N_{\overline{G}_{n,m}}(S \setminus v)$ for every $v \in S$. Let us show that it implies that $N_{\overline{G}_{n,m}}(S) = U$, and so $|N_{\overline{G}_{n,m}}(S)| = |V|+k \geq |S|+k$. 

Suppose, to the contrary, that $I = \{i \in [n] \colon u_i \not\in N_{\overline{G}_{n,m}}(S)\} \neq \emptyset$. Let $i_0 = \max\{i \colon i \in I, (i+1) \bmod n \notin I\}$. Then there exists $v_r \in S$ such that $u_{(i_0+1) \bmod n} \in N_{\overline G_{n,m}}(v_r)$. Since $N_{\overline{G}_{n,m}}(v_r)\subset N_{\overline{G}_{n,m}}(S\setminus \{v_r\}),$ there exists  $v_l \in S$, $r\neq l$, such that  $u_{(i_0+1) \bmod n}\in  N_{\overline{G}_{n,m}}(v_l)$. By Construction \ref{cons2}, $N_{\overline{G}_{n,m}}(v_{j})=\{u_{i_{j}-k},u_{i_{j}-(k-1)},\ldots,u_{i_{j}}\}$ for every $j\in[m]$. It is easy to check that, for any $j_1,j_2 \in [m]$, $j_1\neq j_2$ implies $i_{j_1}\neq i_{j_2}$. So there is $\begin{vmatrix}N_{\overline{G}_{n,m}}(v_{j_1})\setminus N_{\overline{G}_{n,m}}(v_{j_2})\end{vmatrix}\geq 1$. 

On the other hand, since $u_{(i_0+1) \bmod n}\in  N_{\overline{G}_{n,m}}(v_l) \cap  N_{\overline{G}_{n,m}}(v_r)$ and $u_{i_0 \bmod n}\notin  N_{\overline{G}_{n,m}}(v_l) \cup  N_{\overline{G}_{n,m}}(v_r)$, there is $N_{\overline{G}_{n,m}}(v_l)=N_{\overline{G}_{n,m}}(v_r)=\{u_{i_0+1},u_{i_0+2},\ldots,u_{i_0+k+1}\}$, a contradiction.
%Therefore $u_{i_0}\in N_{\overline{G}_{n,m}}(v_l)\cup N_{\overline{G}_{n,m}}(v_r) \subset N_{\overline{G}_{n,m}}(S)$, a contradiction.

It is easy to check, by the pigeonhole principle, that there is $\delta_V\geq k+1$ for any $k$-critical-bipartite graph $G=(U, V; E)$ of order $(n,m)$. By definition, $\overline{G}_{n,m}$ has $(k+1)m$ edges. So the construction is minimum.
\end{proof}

Combining Theorem \ref{positive} and Lemma \ref{subgraph}, we obtain that the Conjecture~\ref{conjCS} is true.

\begin{cor}
Let $n,m$ be positive integers such that $n>m>1$,  Let $k=n-m$ and $a=\frac{m (k+1)}{n}$ is not an integer. Then $\widehat{G}_{n,m}^{\ceil{a}}$ obtained by Construction \ref{cons1} is $k$-critical-bipartite.\label{corCS}
\end{cor}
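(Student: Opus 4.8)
The plan is to deduce the statement essentially for free from the two preceding results, Theorem~\ref{positive} and Lemma~\ref{subgraph}, together with the Hall-style characterization in Theorem~\ref{LMMR}. First I would record the elementary \emph{monotonicity} of the $k$-critical-bipartite property: if $G=(U,V;E)$ and $G'=(U,V;E')$ are bipartite graphs on the same pair of color classes with $E\subseteq E'$, and $G$ is $k$-critical-bipartite, then so is $G'$. This is immediate from Theorem~\ref{LMMR}, since for every $\emptyset\neq V'\subseteq V$ we have $N_{G}(V')\subseteq N_{G'}(V')$, whence $|N_{G'}(V')|\geq|N_{G}(V')|\geq|V'|+k$; as $G'$ has $k=|U|-|V|>0$, the characterization applies and $G'$ is $k$-critical-bipartite.

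With this observation in hand, the proof of the corollary is a one-line application. By Lemma~\ref{subgraph}, the graph $\overline{G}_{n,m}$ produced by Construction~\ref{cons2} is a subgraph of $\widehat{G}_{n,m}^{\ceil{a}}$ on the \emph{same} vertex classes $U$ and $V$; that is, the inclusion $\overline{E}\subseteq\widehat{E}$ holds with $U$ and $V$ unchanged. By Theorem~\ref{positive}, $\overline{G}_{n,m}$ is $k$-critical-bipartite. Applying the monotonicity observation with $G=\overline{G}_{n,m}$ and $G'=\widehat{G}_{n,m}^{\ceil{a}}$ then yields that $\widehat{G}_{n,m}^{\ceil{a}}$ is $k$-critical-bipartite, which is exactly the claim of Conjecture~\ref{conjCS} (now Corollary~\ref{corCS}). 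One small bookkeeping point I would make explicit is that the hypothesis ``$a=\frac{m(k+1)}{n}$ is not an integer'' is used only to place us in the regime of Lemma~\ref{subgraph} where $\ceil{a}>a$; the monotonicity argument itself never uses it, and indeed in the integer case Observation~\ref{obs_cons_eq} gives $\overline{G}_{n,m}=\widehat{G}_{n,m}^{a}$ directly.

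I do not expect any genuine obstacle here: all the combinatorial difficulty has already been absorbed into Theorem~\ref{positive} (the induction on $|S|$ verifying the neighborhood-expansion condition for $\overline{G}_{n,m}$) and into Lemma~\ref{subgraph} (the floor/ceiling estimate showing every edge of $\overline{E}$ lies in $\widehat{E}$). The only care needed is to confirm that ``subgraph'' in Lemma~\ref{subgraph} means a \emph{spanning} subgraph with the identical bipartition, so that neighborhoods computed in $\overline{G}_{n,m}$ are legitimately contained in those computed in $\widehat{G}_{n,m}^{\ceil{a}}$; this is clear since both constructions use the same index sets $U=\{u_i\mid i\in[n]\}$ and $V=\{v_j\mid j\in[m]\}$. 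If one wished to avoid citing Theorem~\ref{LMMR} a second time, the same conclusion also follows from Theorem~\ref{th:tilde} by noting that enlarging $G$ enlarges $\tilde{G}$ and that $k$-extendability is likewise monotone under edge addition (via Theorem~\ref{thm:extendability_ft}), but the direct neighborhood argument above is the cleanest route.
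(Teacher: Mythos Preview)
Your proposal is correct and follows exactly the approach the paper takes: the paper simply states that combining Theorem~\ref{positive} and Lemma~\ref{subgraph} yields the corollary, and you have spelled out the implicit monotonicity step (via Theorem~\ref{LMMR}) that makes this combination work. Your additional remarks about the spanning-subgraph bookkeeping and the role of the non-integrality hypothesis are accurate elaborations of what the paper leaves tacit.
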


\section{Negative Construction}\label{sec:neg_construction}

In this section we give a construction that yields graphs $G=(U,V;E)$ of order $(n,m)$ that also have $|E|=m(k+1)$, $\Delta(U)=\ceil{\frac{m(k+1)}{n}}$, and $\Delta(V)=k+1$, where $k=n-m$, but are not $k$-critical-bipartite. So these properties are not sufficient for a graph to be $k$-critical-bipartite. 

Note that Theorem \ref{positive}, together with the simple observation that there is $\delta(V) \geq k+1$ in any $k$-critical-bipartite $G=(U,V;E)$ with $n>m>1$ and $k=n-m$, implies that there is $\delta(V) = \Delta(V) = k+1$, $\Delta(U) =\ceil{\frac{m(k+1)}{n}}$, {and $\delta(U) \leq k$} if $G=(U,V;E)$ is minimum $k$-critical-bipartite. Both in Construction \ref{cons2} and Construction \ref{cons4} that follows, there is $\delta(U) =\floor{\frac{m(n-m+1)}{n}}$. The graphs obtained by the two constructions have the same degree sequences, so even fixing the vertex degrees does not make a graph $k$-critical-bipartite.

Cichacz and Suchan in \cite{CS} gave the following construction of a class of biregular graphs.
\begin{cons}[\cite{CS}]\label{ncons1}
Let $n, m$ be positive integers such that $n>m>1$,  and $a=\frac{(n-m+1)m}{n}$ is an integer. Let $\gcd(n,m)=c$, $n=cx$, and $m=cy$. Let $\check{G}_{n,m}^a=(U,V; E)$ be the bipartite graph with $U=\{u_i \mid i \in [n]\}$, $V=\{v_j \mid j \in [m]\}$, $E = \{ (u_i, v_{(j+\alpha) \bmod{m}}) \mid i \in [n], \alpha \in [a], j=\floor{\frac{i}{x}}y\}$.
\end{cons}

It is easy to check that the graph $\check{G}_{n,m}^a=(U, V; E)$ can also be constructed in the following way. Let $b=n-m+1$ and $d=\gcd(a,b)$. Note that there is $a=dy$ and $b=dx$ (see \cite{CS}). Let $\check{G}$ be a $d$-regular bipartite graph having color classes $U^{\prime}=\{u_i \mid i \in [c]\}$ and $V^{\prime}=\{v_j \mid j \in [c]\}$, where $c=\gcd(n,m)$, such that $E(\check{G})=\{u_iv_{(i+\delta)\bmod{c}},\; i \in [c], \delta \in [d] \}$. We construct the graph $\check{G}_{n,m}^a=(U,V; E)$ by ``blowing up'' each vertex $u_i$ into $x=n/c$ vertices $u_{i,\alpha}$, $\alpha \in [x]$, and each vertex $v_j$ into $y=m/c>1$ vertices $v_{j,\beta}$, $\beta \in [y]$. Each edge from $\check{G}$ is substituted by the corresponding complete bipartite graph $K_{x,y}$. Note that $\check{G}_{n,m}^a$ is $(a,b)$-regular.

The authors showed that, despite having the same degrees as minimum biregular $k$-critical-bipartite graphs, the graphs obtained by Construction \ref{ncons1} tend not to be $k$-critical-bipartite.

\begin{obs}[\cite{CS}]\label{kcons1}
Let $n, m$ be positive integers such that $n>m>1$, $k=n-m$, and $a=\frac{m (k+1)}{n}$ is an integer. The graph $\check{G}_{n,m}^a$  given in Construction~\ref{ncons1} is biregular $k$-critical-bipartite if and only if $\gcd(n,m)=m$.
\end{obs}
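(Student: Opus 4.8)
The plan is to apply the Hall-style characterisation of Theorem~\ref{LMMR}: the graph $\check{G}_{n,m}^a$ is $k$-critical-bipartite if and only if $|N(V')|\ge|V'|+k$ for every nonempty $V'\subseteq V$. Everything hinges on one algebraic identity. Since $b=dx$ and $b=n-m+1=cx-cy+1$, we get $(c-d)x=cy-1=m-1$. I would first record the block structure inherited from the blow-up description of Construction~\ref{ncons1}: because $a=dy$, the vertex $v_j$ is adjacent precisely to those $u_i$ whose block index $\floor{i/x}$ lies in the length-$d$ cyclic interval ending at $\floor{j/y}$ in $\mathbb{Z}_c$, and each such block index accounts for exactly $x$ vertices of $U$. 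In particular, the $y$ vertices $v_j$ sharing a common value of $\floor{j/y}$ (the $y$ copies of one vertex of $\check{G}$) have identical neighbourhoods, each of size $dx=b$.

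For the implication ``$k$-critical-bipartite $\Rightarrow\gcd(n,m)=m$'' I would argue by contraposition. Assume $c<m$, i.e.\ $y>1$, and take $V'$ to be one full block of $y$ vertices with a common neighbourhood, say $V'=\{v_0,\dots,v_{y-1}\}$. Then $|N(V')|=dx$ while $|V'|+k=y+k$. A one-line manipulation with $(c-d)x=m-1$ (and $cx-cy=k$, $cy=m$) gives $dx\ge y+k\iff y\le1$, so for $y>1$ we have $|N(V')|=dx<y+k=|V'|+k$, violating Hall's condition; hence $\check{G}_{n,m}^a$ is not $k$-critical-bipartite.

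For the converse I would assume $\gcd(n,m)=m$, so $c=m$, $y=1$, $d=a$, $x\ge2$, and the identity becomes $ax=m(x-1)+1$. Here $V=\{v_j:j\in[m]\}$, the set $U$ splits into $m$ blocks of size $x$ indexed by $\floor{i/x}\in\mathbb{Z}_m$, and $N(v_j)$ consists of the blocks whose index lies in the length-$a$ cyclic interval $I_j=\{j-a+1,\dots,j\}\subseteq\mathbb{Z}_m$. Hence, for $V'=\{v_j:j\in J\}$, we have $|N(V')|=xW$ with $W=\big|\bigcup_{j\in J}I_j\big|$ the number of $U$-blocks met. The combinatorial core is the bound $W\ge\min\{m,\,|J|+a-1\}$: if the union is a proper subset of $\mathbb{Z}_m$, I would split it into its maximal arcs and observe, by locating the extreme covered points, that an arc in which $p_i$ of the right endpoints $j\in J$ lie has length at least $p_i+a-1$; summing over arcs gives $W\ge|J|+a-1$. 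If instead $W=m$, then $|N(V')|=mx\ge|J|+k$ trivially. In the remaining case, substituting $W\ge|J|+a-1$ and using $ax=m(x-1)+1$ reduces the target inequality $xW\ge|J|+k$ (with $k=m(x-1)$) to the clean statement $(x-1)(|J|-1)\ge0$, which holds since $x\ge2$, with equality exactly when $|J|=1$. This establishes Hall's condition for every $V'$, so $\check{G}_{n,m}^a$ is $k$-critical-bipartite.

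The main obstacle is the converse direction. The reduction of Hall's condition to a cyclic-interval covering problem is natural, but the covering bound $W\ge|J|+a-1$ must be proved with care on the cycle, correctly handling several maximal arcs and the wrap-around; and the final inequality is \emph{exactly tight} at singletons, reflecting that these biregular graphs sit precisely at the critical threshold (indeed $|N(\{v_j\})|=b=k+1$). Once the identity $ax=m(x-1)+1$ is isolated, however, the concluding computation collapses to $(x-1)(|J|-1)\ge0$ and is immediate.
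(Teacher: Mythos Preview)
Your argument is correct. Both implications go through cleanly: the failure of Hall's condition for a single $V$-block when $y>1$ is immediate from $dx=b=k+1$, and in the converse the cyclic-interval covering bound $W\ge|J|+a-1$ (proved by summing arc lengths $\ge p_i+a-1$ over the maximal arcs) together with the identity $ax=m(x-1)+1$ reduces the Hall inequality exactly to $(x-1)(|J|-1)\ge0$.

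There is nothing to compare against in this paper: Observation~\ref{kcons1} is quoted from~\cite{CS} and is not re-proved here. Your write-up is therefore a self-contained proof of a result the present paper only cites. One minor comment: you should state explicitly that $x\ge2$ follows from $n>m$ and $m\mid n$ (so that the final inequality is genuinely non-trivial), and it is worth noting that the case $W=m$ is not merely ``trivial'' but uses $|J|\le m$ and $n=m+k$. Otherwise the argument is complete and the tight case $|J|=1$ is correctly identified as the threshold.
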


Cichacz and Suchan~\cite{CS} complemented Construction \ref{ncons1} with another construction for the case where $\gcd(n,m)=m$ and $a = \frac{m(n-m+1)}{n}$ is an integer to get the following result.

\begin{obs}[\cite{CS}]
Let $n=|U|$, $m=|V|\in $ be such positive integers that $1 < m < n$, $k=n-m$ and $a = \frac{m(k+1)}{n}$ is an integer. There exists an $(a, k+1)$-regular bipartite graph $G = (U, V ;E)$ that is not $k$-critical if and only if $a < m - 1$.
\end{obs}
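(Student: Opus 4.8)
The plan is to first pin down the exact range of $a$. Writing $a=\frac{m(k+1)}{m+k}$ and using $m>1$, one checks $a<m$ (since $a<m\iff k+1<m+k$) and $a>1$ (since $a>1\iff mk>k$); as $a$ is assumed to be an integer, this yields $2\le a\le m-1$. Hence the negation of the stated condition, $a\ge m-1$, is equivalent to $a=m-1$, which in turn corresponds to $n=m(m-1)$, $k=m(m-2)$, $k+1=(m-1)^2$. So I would split the equivalence into two implications: if $a=m-1$ then \emph{every} $(a,k+1)$-regular bipartite graph of order $(n,m)$ is $k$-critical, and if $a\le m-2$ then \emph{some} $(a,k+1)$-regular bipartite graph of order $(n,m)$ fails to be $k$-critical.

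For the first implication the key observation is that when $a=m-1=|V|-1$, each vertex of $U$ is non-adjacent to exactly one vertex of $V$. Consequently, for any $V'\subseteq V$ with $|V'|\ge 2$, no vertex of $U$ can miss all of $V'$, so $N(V')=U$ and $|N(V')|=n=m+k\ge|V'|+k$; and for $|V'|=1$ we have $|N(V')|=k+1\ge 1+k$ by the $(k+1)$-regularity of $V$. By Theorem~\ref{LMMR} the graph is $k$-critical, and since the argument used no structure beyond the degrees, it applies to every such graph.

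For the second implication I would build the counterexample around a Hall-violating pair. The target is an $(a,k+1)$-regular simple bipartite graph containing two vertices $v_0,v_1\in V$ with the same neighbourhood $W\subseteq U$, $|W|=k+1$; then $V'=\{v_0,v_1\}$ gives $|N(V')|=k+1<k+2=|V'|+k$, so $G$ is not $k$-critical by Theorem~\ref{LMMR}. Concretely, fix such a $W$ and join both $v_0$ and $v_1$ to all of $W$ (forming a $K_{2,k+1}$ block; each vertex of $W$ thereby receives $2$ edges, which is admissible precisely because $a\ge 2$), and complete the graph by a bipartite graph between $U$ and $V'':=V\setminus\{v_0,v_1\}$ in which the $m-1$ vertices of $U\setminus W$ have degree $a$, the $k+1$ vertices of $W$ have the residual degree $a-2$, and every vertex of $V''$ has degree $k+1$. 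A degree count shows that both sides of this residual demand sum to $(m-2)(k+1)$, so it is at least consistent (and note $a\le m-2$ forces $m\ge 4$, so $V''$ is genuinely present).

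The main obstacle is guaranteeing that this residual bipartite degree sequence is actually realizable, which I would settle with the Gale--Ryser theorem. The obvious necessary bounds hold, since each prescribed $U$-degree is at most $a\le m-2=|V''|$ and each $V''$-degree equals $k+1\le n$. The Gale--Ryser inequalities then simplify because the $V''$-side is regular: the bound $\sum_{v\in V''}\min(\deg v,t)=(m-2)\min(k+1,t)$ is constant and equal to the total degree sum $(m-2)(k+1)$ once $t\ge k+1$, and since the partial sums of the sorted $U$-degrees are non-decreasing and reach exactly $(m-2)(k+1)$ at $t=n$, all thresholds $t\ge k+1$ pass automatically. For $t\le k+1$ the decisive case is the endpoint, where, using $na=m(k+1)$, the inequality becomes $(m-1)a=(k+1)(m-a)\le(m-2)(k+1)$, i.e.\ $a\ge 2$, while the smaller thresholds reduce to $a\le m-2$; a short monotonicity check covers the intermediate range. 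Both conditions are exactly our standing bounds $2\le a\le m-2$, so the residual graph exists. Its union with the $K_{2,k+1}$ block is then a simple $(a,k+1)$-regular bipartite graph that is not $k$-critical, completing the construction.
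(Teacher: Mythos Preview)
Your argument is correct. The range computation $2\le a\le m-1$ is right, the ``only if'' direction for $a=m-1$ is a clean pigeonhole (each $u\in U$ misses exactly one vertex of $V$, so any $V'$ of size at least two has $N(V')=U$), and for $a\le m-2$ the two-vertex-with-common-neighbourhood construction works; the residual bipartite degree sequence does satisfy Gale--Ryser, reducing exactly to $2\le a\le m-2$ as you claim. Your exposition of the Gale--Ryser check is somewhat compressed (in particular the phrase ``the decisive case is the endpoint'' and the identity $(m-1)a=(k+1)(m-a)$ need the reader to reconstruct which threshold you mean), but the inequalities hold.

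Note, however, that the present paper does \emph{not} prove this observation: it is quoted from~\cite{CS}. From the surrounding text one can infer the approach taken there, and it is genuinely different from yours. In~\cite{CS} the construction of a non-$k$-critical $(a,k+1)$-regular graph proceeds by a case split on $\gcd(n,m)$: when $\gcd(n,m)<m$, Construction~\ref{ncons1} (a ``blow-up'' of a small circulant-type bipartite graph) already fails to be $k$-critical by Observation~\ref{kcons1}; the remaining case $\gcd(n,m)=m$ is handled by a separate explicit construction. Your route is more uniform---one construction for all $a\le m-2$, with realizability certified by Gale--Ryser---and avoids the $\gcd$ dichotomy entirely. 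The trade-off is that the \cite{CS} approach produces concrete, explicitly described graphs, whereas yours appeals to an existence theorem for the residual piece. Both the ``only if'' direction and the overall logic of your proof are self-contained and do not rely on the machinery of~\cite{CS}.
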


So we know constructions of $(a,b)$-regular bipartite graphs of order $(n,m)$ that are not $k$-critical-bipartite, where $b=n-m+1$ and $a=\frac{m(k+1)}{n}$, whenever such graphs exist. In what follows, we focus on the cases where $a=\frac{m(k+1)}{n}$ is not an integer.

Let us recall the results of Havel-Hakimi on constructing bipartite graphs based on degree sequences that are useful for constructing graphs that are not $k$-critical-bipartite. Let $P : p_0 \geq p_1 \geq \ldots \geq p_{n-1}$ and $Q : q_0 \geq q_1 \geq \ldots \geq q_{n-1}$ be sequences of non-negative integers. The pair $(P, Q)$ is \textit{bigraphic} if there exists a bipartite graph $G=(U,V; E)$ with $|U|=n$ and $|V|=m$ in which $P$ and $Q$ describe the degrees of the vertices in $U$ and $V$, respectively.  The following theorem is a version of Havel-Hakimi’s theorem for bigraphic sequences.

\begin{thm}[\cite{West}]\label{West}
The pair $(P, Q)$ is bigraphic if and only if the pair $(P',Q')$ is bigraphic, where $(P',Q')$ is obtained from $(P, Q)$ by deleting the largest element $p_1$ of $P$ and subtracting one from each of the $p_1$ largest elements of $Q$.\end{thm}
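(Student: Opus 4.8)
The plan is to prove both directions by the standard degree-sequence exchange argument (the bipartite Havel--Hakimi technique), phrased in terms of bipartite realizations. Throughout, let $p_1$ denote the largest element of $P$, and let $T\subseteq V$ be a fixed set of $p_1$ vertices of largest $Q$-degree (breaking ties arbitrarily), so that $\deg v\ge \deg v'$ whenever $v\in T$ and $v'\notin T$.

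First I would dispatch the easy implication: if $(P',Q')$ is bigraphic, then so is $(P,Q)$. I would take a realization $G'=(U',V;E')$ of $(P',Q')$ with $|U'|=n-1$, add to $U'$ a new vertex $u$, and join $u$ exactly to the $p_1$ vertices of $V$ whose $Q$-degrees were decremented in passing from $Q$ to $Q'$. Each such vertex then regains one unit of degree and so attains its prescribed $Q$-value, every other vertex of $V$ keeps its degree, and $u$ receives degree $p_1$. Hence the resulting graph realizes $(P,Q)$. The only content here is the bookkeeping of which $V$-vertices are decremented, which is immediate from the definition of $(P',Q')$.

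The substantial direction is the converse. Assuming $(P,Q)$ is bigraphic, I would choose, among all its realizations together with a fixed vertex $u\in U$ of degree $p_1$, one that maximizes the number of edges from $u$ into $T$. The key claim is that in this extremal realization $u$ is adjacent to all of $T$; granting this, the proof finishes at once, since deleting $u$ removes its $p_1$ edges, each incident to a distinct vertex of $T$, leaving a bipartite graph whose $U$-degrees are $P$ with its largest entry deleted and whose $V$-degrees are $Q$ with one subtracted from each of the $p_1$ largest entries --- exactly a realization of $(P',Q')$.

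To prove the claim I would argue by contradiction with an alternating swap. If $u$ misses some $v\in T$, then, since $\deg u=p_1=|T|$, the vertex $u$ must send an edge to some $v'\notin T$; by the choice of $T$, $\deg v\ge \deg v'$. As $u\in N(v')\setminus N(v)$, the set $N(v)$ is strictly larger than $N(v')\setminus\{u\}$, so there is $w\in N(v)\setminus N(v')$ with $w\ne u$. Exchanging the edges $uv'$ and $wv$ for $uv$ and $wv'$ preserves every vertex degree --- hence yields another realization of $(P,Q)$ --- while strictly increasing the number of edges from $u$ into $T$, contradicting extremality. The main obstacle, and essentially the only step needing care, is guaranteeing the swap partner $w$: one must combine the degree inequality $\deg v\ge \deg v'$ with the adjacencies $u\sim v'$, $u\not\sim v$ to see that $N(v)\not\subseteq N(v')\setminus\{u\}$, which is precisely where membership $v\in T$, $v'\notin T$ is used. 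This completes the argument.
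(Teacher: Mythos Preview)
Your argument is correct and is precisely the standard bipartite Havel--Hakimi proof: realize $(P',Q')$ and reattach a vertex for the backward direction, and for the forward direction pick an extremal realization maximizing $|N(u)\cap T|$ and use a $2$-switch to reach a contradiction. The only delicate point---producing the swap partner $w$---is handled cleanly via the inequality $|N(v)|\ge |N(v')|>|N(v')\setminus\{u\}|$ together with $u\notin N(v)$.

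As for comparison with the paper: there is nothing to compare. The paper does not prove this statement at all; it is quoted with attribution to \cite{West} as a known tool and used only to certify that certain degree pairs are realizable. So your write-up supplies a proof where the paper deliberately omits one, and it matches the textbook argument one would find in the cited reference.
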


%https://upcommons.upc.edu/bitstream/handle/2117/1490/sequences.pdf

Let $x, y, b$ be positive integers such that $x>y>1$ and $b\leq x$. Let $r=(b \bmod x)$, $b = dx+r$, and $l=by-x\floor{\frac{yb}{x}}$. Let $p_i=\ceil{\frac{yb}{x}}$ for $i\in[l]$ and $p_i=\floor{\frac{yb}{x}}$ for $i\in\{l,\ldots,x-1\}$, let $q_j=b$ for $j\in[y]$. Let $P=(p_0, p_1, \ldots, p_{x-1})$ and $Q=(q_0, q_1, \ldots, q_{y-1})$. Note that there is $r=by-dx=by-x\floor{\frac{yb}{x}}$ and $by=(x-r)d+r(d+1)$, which implies that $\sum_{i \in [x]}p_i = by$. So the pair $(P,Q)$ is bigraphic by Theorem~\ref{West}. 
Note that  $(P,Q)$ is bigraphic if and only if $(Q,P)$ is. 

Let us present two constructions based on $P$ and $Q$ defined above: one for $(P,Q)$ and the other for $(Q,P)$. The graphs thus obtained have different properties.

\begin{figure}[ht!]
\begin{center}
\begin{tikzpicture}[scale=1,style=thick,x=1cm,y=1cm]
\def\vr{3pt} % \vr = vertex radius;

% define vertices
%%%%%
%%%%%
%\path (0,0) coordinate (u7);
\path (0,1) coordinate (u6);
\path (0,2) coordinate (u5);
\path (0,3) coordinate (u4);
\path (0,4) coordinate (u3);
\path (0,5) coordinate (u2);
\path (0,6) coordinate (u1);
%\path (2,1) coordinate (v6);
\path (2,2) coordinate (v5);
\path (2,3) coordinate (v4);
\path (2,4) coordinate (v3);
\path (2,5) coordinate (v2);
\path (2,6) coordinate (v1);

%\path (5,0) coordinate (1u7);
\path (5,1) coordinate (1u6);
\path (5,2) coordinate (1u5);
\path (5,3) coordinate (1u4);
\path (5,4) coordinate (1u3);
\path (5,5) coordinate (1u2);
\path (5,6) coordinate (1u1);
%\path (7,1) coordinate (1v6);
\path (7,2) coordinate (1v5);
\path (7,3) coordinate (1v4);
\path (7,4) coordinate (1v3);
\path (7,5) coordinate (1v2);
\path (7,6) coordinate (1v1);

%  edges
\draw (u6) -- (v5) -- (u3) -- (v1) -- (u1) -- (v2) -- (u4) -- (v3) -- (u2) -- (v4) -- (u5);

\draw (1u1) -- (1v1) -- (1u2) -- (1v4) -- (1u1);
\draw (1u3) -- (1v2) -- (1u4) -- (1v5) -- (1u3);
\draw (1u5) -- (1v3) -- (1u6);

\draw (u1) [fill=white] circle (\vr);
\draw (u2) [fill=white] circle (\vr);
\draw (u3) [fill=white] circle (\vr);
\draw (u4) [fill=white] circle (\vr);
\draw (u5) [fill=white] circle (\vr);
\draw (u6) [fill=white] circle (\vr);
\draw (v1) [fill=white] circle (\vr);
\draw (v2) [fill=white] circle (\vr);
\draw (v3) [fill=white] circle (\vr);
\draw (v4) [fill=white] circle (\vr);
\draw (v5) [fill=white] circle (\vr);

\draw[anchor = east] (u6) node {$u_5$};
\draw[anchor = east] (u5) node {$u_4$};
\draw[anchor = east] (u4) node {$u_3$};
\draw[anchor = east] (u3) node {$u_2$};
\draw[anchor = east] (u2) node {$u_1$};
\draw[anchor = east] (u1) node {$u_0$};
\draw[anchor = west] (v5) node {$v_4$};
\draw[anchor = west] (v4) node {$v_3$};
\draw[anchor = west] (v3) node {$v_2$};
\draw[anchor = west] (v2) node {$v_1$};
\draw[anchor = west] (v1) node {$v_0$};

\draw (1u1) [fill=white] circle (\vr);
\draw (1u2) [fill=white] circle (\vr);
\draw (1u3) [fill=white] circle (\vr);
\draw (1u4) [fill=white] circle (\vr);
\draw (1u5) [fill=white] circle (\vr);
\draw (1u6) [fill=white] circle (\vr);
\draw (1v1) [fill=white] circle (\vr);
\draw (1v2) [fill=white] circle (\vr);
\draw (1v3) [fill=white] circle (\vr);
\draw (1v4) [fill=white] circle (\vr);
\draw (1v5) [fill=white] circle (\vr);

\draw[anchor = east] (1u6) node {$u_5$};
\draw[anchor = east] (1u5) node {$u_4$};
\draw[anchor = east] (1u4) node {$u_3$};
\draw[anchor = east] (1u3) node {$u_2$};
\draw[anchor = east] (1u2) node {$u_1$};
\draw[anchor = east] (1u1) node {$u_0$};
\draw[anchor = west] (1v5) node {$v_4$};
\draw[anchor = west] (1v4) node {$v_3$};
\draw[anchor = west] (1v3) node {$v_2$};
\draw[anchor = west] (1v2) node {$v_1$};
\draw[anchor = west] (1v1) node {$v_0$};

\draw(1,0.5) node {$\dot{G}_{6,5}^2$};
\draw(6,0.5) node {$\ddot{G}_{6,5}^2$};

\end{tikzpicture}
\end{center}
\caption{The graphs $\dot{G}_{6,5}^2$ and $\ddot{G}_{6,5}^2$}\label{rys}
\end{figure}
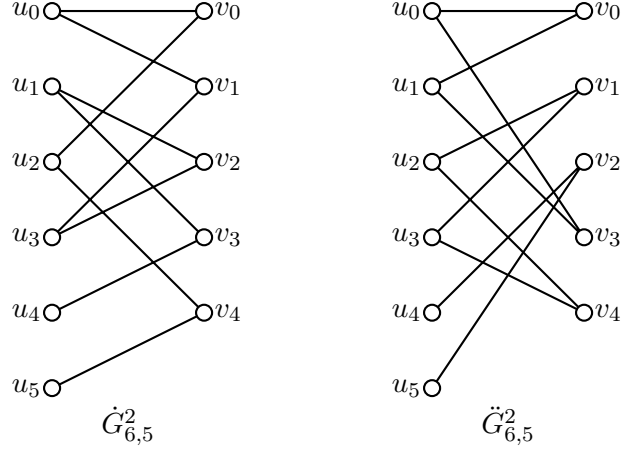

\begin{cons}\label{cons4}
Let $x, y, b$ be positive integers such that $x>y>1$ and $b\leq x$. Let $l=by-x\floor{\frac{yb}{x}}$. Let $p_i=\ceil{\frac{yb}{x}}$ for $i\in[l]$ and $p_i=\floor{\frac{yb}{x}}$ for $i\in\{l,\ldots,x-1\}$. Let $D_0=0$ and $D_i=(\sum_{j=0}^{i-1}p_i) \bmod y$ for $i \in \{1, \ldots, x-1\}$. Let $\dot{G}_{x,y}^b=(U,V; E)$ be the bipartite graph with $U=\{u_i \mid i \in [x]\}$, $V=\{v_j \mid j \in [y]\}$ such that for every $i \in [x]$:
$$N_{\dot{G}_{x,y}^b}(u_{i})=\{v_{(D_{i}+\pi)\bmod y} \mid \pi \in [p_i]\}.$$
\end{cons}

\begin{lem}\label{cons4graph}
Let $x, y, b$ be positive integers such that $x>y>1$ and $b\leq x$. Then the graph $\dot{G}_{x,y}^b=(U,V; E)$ obtained by Construction \ref{cons4} has size $|E|=by$, $\deg(u)\in\left\{\floor{\frac{yb}{x}},\ceil{\frac{yb}{x}}\right\}$ for every $u\in U$, and $\deg(v)=b$ for every $v\in V$. \end{lem}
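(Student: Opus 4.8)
The plan is to verify the three assertions in increasing order of difficulty: the statement about the degrees in $U$ and the value of $|E|$ are bookkeeping, while the claim that every vertex of $V$ has degree exactly $b$ carries the real content and requires understanding how the offsets $D_i$ spread the neighborhoods around the cyclic index set $[y]$. Throughout I would set $q=\floor{\frac{yb}{x}}$ and introduce the partial sums $S_i=\sum_{j=0}^{i-1}p_j$ for $i\in\{0,1,\ldots,x\}$, so that $D_i=S_i\bmod y$, $S_0=0$, and $S_{i+1}=S_i+p_i$. The picture to keep in mind is that each $u_i$ occupies the $p_i$ consecutive positions $S_i,S_i+1,\ldots,S_{i+1}-1$ of the integer segment $[0,S_x)$, read modulo $y$.

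First I would record that $p_i\leq y$ for every $i$: since $b\leq x$ we have $\frac{yb}{x}\leq y$, hence $\ceil{\frac{yb}{x}}\leq y$. Consequently, for fixed $i$ the $p_i$ indices $(D_i+\pi)\bmod y$ with $\pi\in[p_i]$ are $p_i$ distinct residues modulo $y$, so $\deg(u_i)=p_i\in\{\floor{\frac{yb}{x}},\ceil{\frac{yb}{x}}\}$, which settles the claim for $U$. Summing over $U$ and using that exactly $l$ of the $p_i$ equal $\ceil{\frac{yb}{x}}$ while the remaining $x-l$ equal $\floor{\frac{yb}{x}}$ gives
$$|E|=\sum_{i\in[x]}p_i=l\,\ceil{\frac{yb}{x}}+(x-l)\floor{\frac{yb}{x}}.$$
Substituting $l=by-x\floor{\frac{yb}{x}}$ and distinguishing the cases $x\mid yb$ and $x\nmid yb$ (in the latter, $\ceil{\frac{yb}{x}}=\floor{\frac{yb}{x}}+1$) reduces this to $|E|=by$ by a one-line computation; in particular $S_x=by$.

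The core step is the count of $\deg(v_j)$ for a fixed $j\in[y]$. By Construction~\ref{cons4}, $N(u_i)$ consists of the residues of $S_i,S_i+1,\ldots,S_{i+1}-1$ modulo $y$, so $u_i$ is adjacent to $v_j$ exactly when the half-open integer interval $[S_i,S_{i+1})$ contains some $s$ with $s\equiv j\pmod y$. Since this interval has length $p_i\leq y$, any two of its elements differ by less than $y$ and are therefore incongruent modulo $y$; hence $[S_i,S_{i+1})$ contains at most one such $s$, and $u_i$ contributes at most $1$ to $\deg(v_j)$. Now the intervals $[S_i,S_{i+1})$, $i\in[x]$, partition $[0,S_x)=[by]$, so the elements of $[by]$ congruent to $j$ modulo $y$, namely $j,j+y,\ldots,j+(b-1)y$ (exactly $b$ of them), are distributed among these intervals with at most one per interval. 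Thus precisely $b$ of the intervals contain such an element, and $\deg(v_j)=b$ for every $j\in[y]$.

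The main obstacle is this last count, where two facts must conspire: each $u_i$ covers $v_j$ at most once, which is exactly the inequality $p_i\leq y$ coming from $b\leq x$, and the consecutive offsets $D_i=S_i\bmod y$ make the neighborhoods tile the cycle $[y]$ without gaps. The injectivity of reduction modulo $y$ on each length-$p_i$ interval is what upgrades the crude coverage bound (the total length $S_x=by$ wraps around a cycle of length $y$ exactly $b$ times) into the exact equality $\deg(v_j)=b$; were $p_i>y$ possible, a single $u_i$ could reach $v_j$ more than once and the equality would break.
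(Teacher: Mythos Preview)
Your proof is correct. The paper's own argument is very terse: it simply asserts that $\dot{G}_{x,y}^b$ is the graph produced by the Havel--Hakimi procedure (Theorem~\ref{West}) applied to the pair $(P,Q)$, and concludes that it therefore realizes the intended degree sequence. No further verification is offered.

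Your route is genuinely different and more self-contained. Instead of invoking Havel--Hakimi, you check the degree conditions directly via an interval-partition argument: each $u_i$ owns the block $[S_i,S_{i+1})$ of $[0,by)$, read modulo $y$, and the hypothesis $b\le x$ forces each block to have length $p_i\le y$, so it meets each residue class modulo $y$ at most once. The count $\deg(v_j)=b$ then falls out as a clean pigeonhole: the $b$ integers in $[0,by)$ congruent to $j$ are spread across the partition with at most one per block, hence exactly $b$ blocks are hit. The benefit of your approach is that it is elementary and does not ask the reader to recognize Construction~\ref{cons4} as an instance of the Havel--Hakimi greedy realization (a claim the paper states but does not argue); the paper's approach is shorter on the page but leaves that identification as an exercise.
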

\begin{proof}
Note that the graph $\dot{G}_{x,y}^b$ is a graph constructed as in Theorem~\ref{West} for $(P,Q)$ for $P$ and $Q$ defined above, where $P$ and $Q$ describe the degrees of vertices in $U$ and $V$, respectively. Thus $\deg(u)\in\left\{\floor{\frac{yb}{x}},\ceil{\frac{yb}{x}}\right\}$ for every $u\in U$ and $\deg(v)=b$ for every $v\in V$.
\end{proof}

\begin{cons}\label{cons5}
Let $x, y,b$ be positive integers such that $b\leq x$.
Let $\ddot{G}_{x,y}^b=(U,V; E)$ be the bipartite graph with $U=\{u_i \mid i \in [x]\}$, $V=\{v_j \mid j \in [y]\}$, such that for each $j \in [y]$:
$$N_{\ddot{G}_{x,y}^b}(v_{j})=\{u_{(jb+\beta)\bmod x} \mid \beta \in [b]\}.$$

%$$N_{\ddot{G}_{x,y}^b}(u_{i})=\{v_{j(k+1)\bmod y},v_{({j(k+1)}+1)\bmod y},\ldots,v_{({(j+1)(k+1)}-1)\bmod y}\}.$$
 \end{cons}

\begin{lem}\label{cons5graph}
Let $x,y,b$ be positive integers such that  $b\leq x$. Then  $\ddot{G}_{x,y}^b=(U,V; E)$  has size $|E|=by$, $\deg(u)\in\left\{\floor{\frac{yb}{x}},\ceil{\frac{yb}{x}}\right\}$ for every $u\in U$ and $\deg(v)=b$ for every $v\in V$. \end{lem}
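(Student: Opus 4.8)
The plan is to verify the three claimed quantities directly from Construction~\ref{cons5}, each by a short counting argument; the structure is entirely analogous to Lemma~\ref{cons4graph}, but since $\ddot G_{x,y}^b$ is defined via the neighborhoods of the vertices in $V$ rather than those in $U$, the roles are swapped and the argument for the degree of vertices in $U$ becomes the one that needs care.

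First I would establish $\deg(v)=b$ for every $v\in V$ and $|E|=by$. By definition, $N_{\ddot G_{x,y}^b}(v_j)=\{u_{(jb+\beta)\bmod x}\mid \beta\in[b]\}$, a set indexed by $\beta$ ranging over the $b$ values in $[b]$. Since $b\le x$, the map $\beta\mapsto (jb+\beta)\bmod x$ is injective on $[b]$ (two values $\beta_1\ne\beta_2$ in $[b]$ differ by less than $x$ in absolute value, so they cannot be congruent modulo $x$), hence $|N_{\ddot G_{x,y}^b}(v_j)|=b$ and $\deg(v_j)=b$ for each $j\in[y]$. Summing over the $y$ vertices of $V$ gives $|E|=\sum_{j\in[y]}\deg(v_j)=by$.

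The main obstacle is the degree claim for vertices in $U$, namely $\deg(u)\in\{\floor{\frac{yb}{x}},\ceil{\frac{yb}{x}}\}$, because these degrees are not set directly but arise as the number of incoming incidences from the blocks $\{(jb+\beta)\bmod x\mid \beta\in[b]\}$ as $j$ runs over $[y]$. I would count $\deg(u_i)$ as the number of pairs $(j,\beta)$, with $j\in[y]$ and $\beta\in[b]$, satisfying $(jb+\beta)\bmod x = i$. Equivalently, laying the $by$ incidences around the cyclic group $\mathbb Z_x$ as the consecutive arcs $[jb,\,jb+b-1]$ for $j=0,\dots,y-1$, one deposits $by$ marks in total into $x$ positions, each mark landing in position $(jb+\beta)\bmod x$. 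Because the total count is $by$ distributed over $x$ positions, every position receives either $\floor{\frac{yb}{x}}$ or $\ceil{\frac{yb}{x}}$ marks, provided the marks are as evenly spread as this counting permits. The cleanest way to see the evenness is to observe that the multiset $\{(jb+\beta)\bmod x\mid j\in[y],\,\beta\in[b]\}$ equals $\{t\bmod x\mid t\in\{0,1,\dots,by-1\}\}$, i.e. the first $by$ nonnegative integers reduced modulo $x$; indeed the arcs $[jb,\,jb+b-1]$ for $j=0,\dots,y-1$ tile the integer interval $[0,\,by-1]$ without gaps or overlaps. The number of $t\in\{0,\dots,by-1\}$ with $t\equiv i\pmod x$ is exactly $\ceil{\frac{by-i}{x}}$, which equals $\ceil{\frac{by}{x}}$ for the first $(by\bmod x)$ residues and $\floor{\frac{by}{x}}$ for the remaining ones. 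Hence every $\deg(u_i)$ lies in $\{\floor{\frac{yb}{x}},\ceil{\frac{yb}{x}}\}$, completing the proof.

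I expect the only subtlety to be the reduction to the interval $\{0,\dots,by-1\}$: one must note that the arcs are taken \emph{before} reduction modulo $x$ and that consecutive arcs abut exactly (the arc for $j$ ends at $jb+b-1=(j+1)b-1$ and the next begins at $(j+1)b$), so their union over $j\in[y]$ is the full interval $[0,by-1]$ of integers, and only afterwards is the reduction modulo $x$ applied. Once this tiling observation is in place, the degree bounds for $U$ follow from the elementary count of residues among the first $by$ integers, and the lemma is established.
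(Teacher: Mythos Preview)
Your proof is correct. The argument that the intervals $[jb,(j+1)b-1]$ for $j\in[y]$ tile $\{0,\dots,by-1\}$ before reduction modulo $x$, and hence that $\deg(u_i)$ is the number of $t\in\{0,\dots,by-1\}$ with $t\equiv i\pmod x$, is clean and yields the two-value degree claim immediately.

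This is a genuinely different route from the paper. The paper does not count at all: it simply asserts that $\ddot G_{x,y}^b$ is the graph produced by the Havel--Hakimi procedure (Theorem~\ref{West}) applied to the bigraphic pair $(Q,P)$, so its degree sequences are $Q$ on $V$ and $P$ on $U$ by construction. Your approach is more self-contained and elementary---it never invokes Havel--Hakimi and instead reads the degrees directly off the explicit neighborhood description in Construction~\ref{cons5}---whereas the paper's approach is shorter but leans on the reader accepting that Construction~\ref{cons5} realizes the Havel--Hakimi iteration for $(Q,P)$, a correspondence that is plausible but not spelled out. Your tiling observation also makes transparent exactly which $u_i$ have the higher degree (the residues $i<by\bmod x$), information the paper's proof does not surface.
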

\begin{proof}
Note that the graph $\ddot{G}_{x,y}^b$ is a graph constructed as in Theorem~\ref{West} for $(Q,P)$ for $Q$ and $P$ defined above, where $Q$ and $P$ describe the degrees of vertices in $V$ and $U$, respectively. Thus $\deg(u)\in\left\{\floor{\frac{yb}{x}},\ceil{\frac{yb}{x}}\right\}$ for every $u\in U$ and $\deg(v)=b$ for every $v\in V$.
\end{proof}

%\newpage

Note that, in general, $\dot{G}_{x,y}^b\not\cong \ddot{G}_{x,y}^b$. For example, $\dot{G}_{6,5}^2$ is connected, whereas $\ddot{G}_{6,5}^2$ is not (see Figure~\ref{rys}). Based on the above constructions we are able to show the following result.

%https://upcommons.upc.edu/bitstream/handle/2117/1490/sequences.pdf

\begin{obs}\label{obs_neg}
Let $n,m$ be positive integers such that $n>m>1$. Let $k=n-m$. Let $c$ be a positive integer such that $n=cx$ and $m=cy$. If $c>1$ and $k+1\leq x$, or $d=\frac{n}{k+1}>1$ and $d$ is an integer, then there exists a graph $G=(U,V;E)$ such that $\deg(u)\in\left\{\floor{\frac{m(k+1)}{n}},\ceil{\frac{m(k+1)}{n}}\right\}$ for any $u\in U$ and $\deg(v)=k+1$ for any $v\in V$ which is not $k$-critical-bipartite.
\end{obs}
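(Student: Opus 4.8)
The plan is to exhibit, under each of the two hypotheses, an explicit graph built from the biregular blow-up idea of Construction~\ref{ncons1} combined with the building blocks $\dot{G}_{x,y}^b$ and $\ddot{G}_{x,y}^b$ of Constructions~\ref{cons4} and~\ref{cons5}, set $b=k+1$, and then certify that it violates the Hall-style condition of Theorem~\ref{LMMR}. The degree conditions are immediate from Lemmas~\ref{cons4graph} and~\ref{cons5graph} (with $b=k+1$, so that $\deg(v)=k+1$ and $\deg(u)\in\{\floor{m(k+1)/n},\ceil{m(k+1)/n}\}$), so the entire content is in producing a witness set $\emptyset\neq V'\subseteq V$ with $|N(V')|<|V'|+k$.

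First I would treat the case $c>1$ with $k+1\le x$. Here $\gcd(n,m)=c$, $n=cx$, $m=cy$, and the natural object is the blow-up graph of Construction~\ref{ncons1}-type, where each of the $c$ base-vertices of $U$ and $V$ is expanded by factors $x$ and $y$ respectively and each base-edge becomes a complete bipartite graph. Inside each blown-up $K_{x,y}$-block I would instead wire up the smaller irregular gadget $\dot{G}_{x,y}^{k+1}$ (or $\ddot{G}_{x,y}^{k+1}$), so that the resulting $G$ has exactly the prescribed degree sequence by Lemma~\ref{cons4graph}. The key observation is that the blow-up structure confines the neighborhood of any single $V$-block: taking $V'$ to be one entire block of $y$ vertices, its neighborhood lies within a bounded number of $U$-blocks determined by the base graph's degree $d$, so $|N(V')|$ is roughly $d x$ while $|V'|+k = y+k$. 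I would choose $V'$ to be the union of the $V$-vertices in a minimal ``saturated'' block (analogous to the argument behind Observation~\ref{kcons1}, which already shows $\gcd(n,m)=m$ is necessary for the biregular analogue), and verify by a direct count that the inequality $|N(V')|<|V'|+k$ holds; this is where I expect the bulk of the arithmetic, essentially re-deriving the $\gcd(n,m)\ne m$ obstruction in the irregular setting.

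For the second case, $d=n/(k+1)>1$ an integer, the structure of $\ddot{G}_{x,y}^{k+1}$ is the more convenient tool, because its $V$-neighborhoods are the consecutive blocks $N(v_j)=\{u_{(j(k+1)+\beta)\bmod x}\}$, which tile $U$ into $d$ disjoint stretches of length $k+1$. I would take $V'$ to be the set of those $v_j$ whose neighborhoods all fall inside a single such stretch (or a small union of stretches), so that $|N(V')|$ equals a multiple of $k+1$ while $|V'|$ is strictly larger than needed to force $|N(V')|\ge |V'|+k$. Concretely, since the neighborhoods partition $U$ into $d$ intervals, picking the $v_j$'s that share one interval gives $|N(V')|=k+1$ for $|V'|\ge 2$, immediately contradicting $|N(V')|\ge|V'|+k\ge 2+k$. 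The connectivity remark (that $\ddot{G}_{6,5}^2$ is disconnected) is exactly the phenomenon I would exploit: a disconnected component confined to one $U$-stretch furnishes the violating $V'$, and by Theorem~\ref{LN} such disconnection already precludes $k$-criticality.

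The main obstacle will be the first case: verifying that the chosen block-shaped $V'$ genuinely fails the Hall condition requires a careful count of how many distinct $U$-vertices are reached once the internal wiring is the irregular gadget rather than a full $K_{x,y}$, and ensuring that the degree bookkeeping (the split between $\floor{m(k+1)/n}$ and $\ceil{m(k+1)/n}$) does not accidentally push $|N(V')|$ up to $|V'|+k$. I would handle this by first establishing that within one block the gadget's $V$-side neighborhoods never spread beyond the $U$-vertices of the adjacent blocks dictated by the base graph, reducing the global count to a purely local one, and then invoking the parameter inequality $k+1\le x$ (respectively $d>1$) to close the gap. The two hypotheses are precisely the ranges in which such a confined, non-expanding $V'$ exists, so the argument is a disjunction on which gadget and which block selection to use.
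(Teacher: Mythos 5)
Your treatment of the second case is correct and coincides with the paper's own argument: when $d=\frac{n}{k+1}>1$ is an integer, the neighborhoods $N_{\ddot{G}_{n,m}^{k+1}}(v_j)$ are intervals of length $k+1$ that partition $U$ into $d$ disjoint stretches, so $\ddot{G}_{n,m}^{k+1}$ has $d\geq 2$ connected components, Lemma~\ref{cons5graph} supplies the required degrees, and Theorem~\ref{LN} precludes $k$-critical-bipartiteness.

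The first case, however, contains a genuine gap, and it sits exactly where you announce ``the bulk of the arithmetic''. First, your construction is not consistently defined: if the base graph on $c+c$ vertices has degree $d\geq 2$ and you wire each block with the gadget $\dot{G}_{x,y}^{k+1}$, then every vertex of $V$ lies in $d$ blocks and receives degree $d(k+1)\neq k+1$, so the prescribed degree sequence is violated. The requirement $\deg(v)=k+1$ forces the base graph to be a perfect matching, i.e., forces $G$ to be the disjoint union of $c$ copies of $\dot{G}_{x,y}^{k+1}$ --- which is precisely the paper's construction (note that $\frac{y(k+1)}{x}=\frac{m(k+1)}{n}$, so Lemma~\ref{cons4graph} gives the right $U$-degrees, and the hypothesis $k+1\leq x$ is exactly what makes Construction~\ref{cons4} applicable). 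Second, once this collapse is recognized, no Hall-style count is needed at all: since $c>1$ the graph is disconnected, and Theorem~\ref{LN} --- the same one-line argument you use in the second case --- finishes the proof immediately. If you insist on exhibiting a Hall witness via Theorem~\ref{LMMR}, it is a one-liner rather than a careful count: take $V'$ to be the $V$-side of a single copy; then $|N(V')|\leq x$, while $|V'|+k=y+c(x-y)\geq 2x-y>x$ because $c\geq 2$ and $x>y$. As written, your proposal defers precisely this verification (and misidentifies it as the hard part), so the first case is not actually proved; the difficulty you anticipate evaporates once the degree bookkeeping forces your blow-up down to the disjoint union, where disconnectedness does all the work.
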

\begin{proof}
If $c>1$ and $k+1\leq x$, define a graph $\dddot{G}_{n,m}^{k+1}=(U,V; E)$ as the disjoint union of $c$ copies of $\dot{G}_{x,y}^{k+1}=(U',V'; E')$. Since $c>1$, the graph $\dddot{G}_{n,m}^{k+1}$ is disconnected, therefore is not $k$-critical-bipartite by Theorem~\ref{LN}. 

Suppose now that $d=\frac{n}{k+1}>1$ and $d$ is an integer. Then the graph $\ddot{G}^{k+1}_{n,m}$ is disconnected. Indeed, since $N_{\ddot{G}_{n,m}^{k+1}}(v_{j})=\{u_{(j(k+1)+\beta)\bmod n} \mid \beta \in [k+1]\}$, every vertex in $V$ has one of $d=\frac{n}{k+1}>1$ disjoint neighborhoods in $U$. So $\ddot{G}^{k+1}_{n,m}$ has $d$ connected components. By Theorem~\ref{LN}, it means that it is not $k$-critical-bipartite.
\end{proof}

Note that Observation \ref{obs_neg} does not cover all cases of $n$ and $m$. Assumption $n \leq k+1$ implies that we deal with a complete bipartite graph, which is $k$-critical-bipartite. So the cases that are left open have that $\frac{n}{n-m+1}$ is not an integer, and $\gcd(n,m)=1$ or $n-m+1 > \frac{n}{c}$ for any $c$ non-trivial common divisor of $n$ and $m$.

\section{Connectivity}\label{sec:connectivity}

A graph $G=(V, E)$ is said to be $k$-\textit{connected} if it has more than $k$ vertices and remains connected whenever strictly fewer than $k$ vertices are removed. The \textit{connectivity} of $G$, denoted $\kappa(G)$, is the maximum $k$ such that G is $k$-connected.

Given a graph $G$ and two vertices $u$ and $v$ that belong to the same component of $G$, a \textit{vertex cut} in $G$ separating $u$ and $v$ is a set $S$ of vertices of $G$ whose removal leaves $u$ and $v$ in different components of $G - S$. The \textit{local connectivity} $\kappa_{u, v}(G)$ of $u$ and $v$ in $G$ is the size of a smallest vertex cut separating $u$ and $v$. Given a graph $G$, $\kappa(G)$ equals the minimum $\kappa_{u, v}(G)$ over all nonadjacent pairs of vertices $u$, $v$ (except for complete graphs).

For a set $S\subset V(G)$, the \textit{set connectivity} of $S$, denoted by $\kappa_S(G)$, is the size of a smallest vertex cut separating any $u,v\in S$.

Favaron~\cite{Favaron} showed that every $k$-critical graph $G$ of order $n > k$ is $k$-connected and this result is sharp. On the other hand, Li and Nie~\cite{LN} only showed that every $k$-critical-bipartite graph $G$ is $1$-connected. We improve this result to show that for any $k$-critical-bipartite graph $G=(U, V; E)$ with $|U|=n$, $|V|=m$, and $k=n-m > 0$, there is:
\begin{enumerate}
    \item $\kappa_V(G)\geq k$,
    \item $\kappa_U(G) \geq \min\{\delta_U(G),k\}$,
    \item $\kappa(G) \geq \min\{\delta(G),k\}$.
\end{enumerate}

\begin{thm}\label{kappaV}
Let $n,m$ be positive integers such that $1<m<n$. Let $k=n-m$. Then $\kappa_V(G)\geq k$ for any $k$-critical-bipartite graph $G=(U,V;E)$ with $|U|=n$ and $|V|=m$.
\end{thm}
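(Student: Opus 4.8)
The plan is to work directly from the Hall-style characterization of Theorem~\ref{LMMR} together with a counting argument, bypassing Menger entirely. Since $\kappa_V(G)=\min\{\kappa_{v_1,v_2}(G): v_1,v_2\in V\}$ and $G$ is connected by Theorem~\ref{LN}, it suffices to fix an \emph{arbitrary} pair $v_1,v_2\in V$ and an \emph{arbitrary} vertex cut $S$ separating them (so $v_1,v_2\notin S$), and to prove that $|S|\geq k$. Because $\kappa_{v_1,v_2}(G)$ is by definition the size of a smallest such cut, this immediately yields $\kappa_{v_1,v_2}(G)\geq k$, and taking the minimum over all pairs gives $\kappa_V(G)\geq k$.

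Next I would set up the partition induced by $S$ and by the component $C$ of $v_1$ in $G-S$. Write $S_U=S\cap U$ and $S_V=S\cap V$, so $|S|=|S_U|+|S_V|$. Partition $U=A_1\cup A_2\cup S_U$, where $A_1=U\cap C$ and $A_2=U\setminus(A_1\cup S_U)$, and partition $V=B_1\cup B_2\cup S_V$, where $B_1=V\cap C$ and $B_2=V\setminus(B_1\cup S_V)$. By construction $v_1\in B_1$ and, since $v_2$ lies neither in $C$ nor in $S$, we have $v_2\in B_2$; in particular both $B_1$ and $B_2$ are nonempty. The key structural observation is the neighborhood containment $N_G(B_1)\subseteq A_1\cup S_U$ and $N_G(B_2)\subseteq A_2\cup S_U$: a neighbor of a vertex $b\in V$ that is \emph{not} in $S$ must lie in the same component of $G-S$ as $b$, hence in $A_1$ when $b\in B_1$ and in $A_2$ when $b\in B_2$, while any remaining neighbor lies in $S_U$.

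Applying Theorem~\ref{LMMR} to the nonempty sets $B_1$ and $B_2$ then gives $|A_1|+|S_U|\geq|N_G(B_1)|\geq|B_1|+k$ and $|A_2|+|S_U|\geq|N_G(B_2)|\geq|B_2|+k$. Summing these and substituting $|A_1|+|A_2|=n-|S_U|$ and $|B_1|+|B_2|=m-|S_V|$ yields $n+|S_U|+|S_V|\geq m+2k$, i.e. $|S|\geq m+2k-n=k$ using $k=n-m$. This is exactly the bound sought, so no further case analysis is needed.

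I do not expect a serious obstacle here; the argument is essentially a double application of the Hall condition balanced against a vertex count. The only points requiring care are verifying the neighborhood-containment claim when $G-S$ has more than two components (the bound on $N_G(B_2)$ must hold for $B_2$ spread over several components, which it does since each component's $U$-part avoids $A_1$ and $S_U$), and confirming that both $B_1$ and $B_2$ are nonempty so that Theorem~\ref{LMMR} may legitimately be invoked on each — both of which follow from $v_1\in B_1$ and $v_2\in B_2$.
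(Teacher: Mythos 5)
Your proof is correct and takes essentially the same route as the paper's: the same decomposition of $G-S$ into the $U$- and $V$-parts of the side containing $v_1$ versus the rest, the same two applications of Theorem~\ref{LMMR} to both nonempty $V$-parts, and the same global count $n=m+k$. The only difference is presentational — you sum the two Hall-type inequalities to conclude $|S|\geq k$ directly, whereas the paper argues by contradiction via the deficiency variables $\varepsilon_i=|U'_i|-|V'_i|$ — but the underlying algebra is identical.
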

\begin{proof}
Let $G=(U,V;E)$ be a $k$-critical-bipartite graph of order $(n,m)$. Towards a contradiction, suppose that $\kappa_V(G)< k$. So there exists a set $Z \subset U \cup V$ with $|Z|<k$ that separates two vertices $v_1, v_2$ in $V$. Let $Z_1 = Z \cap U$ and $Z_2 = Z \cap V$. Note that there is no path between $v_1$ and $v_2$ in $G'=(U',V'; E')=G[(U\setminus Z_1)\cup(V\setminus Z_2)]$. So we can choose a partition of $G'$ into two subgraphs $G'_1=(U'_1,V'_1;E'_1)$ and $G'_2=(U'_2,V'_2;E'_2)$ that are unions of components of $G'$ with $v_1 \in V'_1$ and $v_2 \in V'_2$.
 
Let $|U'_i|= |V'_i|+\varepsilon_i$ for $i=1,2$. So there is $|V'_1|+|V'_2|+|Z_2|+k=|U'_1|+|U'_2|+|Z_1|=|V'_1|+|V'_2|+|Z_1|+\varepsilon_1+\varepsilon_2$. Thus $|Z_2|+k=|Z_1|+ \varepsilon_1+\varepsilon_2$.

By Theorem \ref{LMMR}, there is $|N_G(V'_1)| \geq |V'_1|+k$. On the other hand, since $U'_1 \cup Z_1 \supseteq N_G(V'_1)$, there is $|U'_1 \cup Z_1| \geq |N_G(V'_1)|$. So, by simplifying $|V'_1|+\varepsilon_1+|Z_1| = |U'_1| + |Z_1| \geq |V'_1|+k$, we get $\varepsilon_1+|Z_1| \geq k$. In a similar way, we get that $\varepsilon_2+|Z_1| \geq k$. %\edtS{Thus $|Z_1|+\varepsilon_1+\varepsilon_2\geq \min\{k+\varepsilon_1,k+\varepsilon_2\}$, which implies $|Z_1|+\varepsilon_2\geq k$ and $|Z_1|+\varepsilon_1\geq k$.

%If $Z_2=\emptyset$, then  $\varepsilon_1+\varepsilon_2=k-|Z_1|$. So $k-|Z_1|+2|Z_1|\geq 2k$, and $|Z_1|\geq k$, a contradiction. Suppose now that $Z_1=\emptyset$. Then $|Z_2|+k=\varepsilon_1+\varepsilon_2\geq 2k$. Thus $|Z_2|\geq k$, a contradiction.

On one hand, since $|Z_2|+k=|Z_1|+ \varepsilon_1+\varepsilon_2\geq k+\varepsilon_2$, we obtain that $|Z_2|\geq \varepsilon_2$. On the other hand, there is $|Z_1|+|Z_2|<k\leq |Z_1|+\varepsilon_2$, so $|Z_2|<\varepsilon_2$, a contradiction.
\end{proof}

\begin{thm}\label{kappaU}
Let $n,m$ be positive integers such that $1<m<n$. Let $k=n-m$. Then $\kappa_U(G) \geq \min\{\delta_U(G),k\}$ for any $k$-critical-bipartite graph $G=(U,V;E)$ with $|U|=n$ and $|V|=m$. Moreover, for every separator $Z$ in $G$ with $|Z|<k$, there exists a vertex $u \in U$ with $N(u) \subseteq Z$. %\comK{Trzeba przejrzeć czy rzeczywiście jest dla dowolnego separatora czy też tylko dla $u_1,u_2$-separatora z $u_1,u_2 \in U$.}
\end{thm}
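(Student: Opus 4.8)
The plan is to obtain both conclusions of the statement from a single structural observation about small separators, leaning on Theorem~\ref{LN} (which guarantees that $G$ is connected) and on Theorem~\ref{kappaV} (which gives $\kappa_V(G)\geq k$). The key observation I would prove first is: \emph{if $Z$ is any separator of $G$ with $|Z|<k$, then at most one component of $G-Z$ contains a vertex of $V$.} This is immediate from Theorem~\ref{kappaV}, since two distinct components each meeting $V$ would exhibit $Z$ as a vertex cut of size less than $k$ separating two vertices of $V$, contradicting $\kappa_V(G)\geq k$.

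From this observation the ``Moreover'' part follows directly. As $G$ is connected and $Z$ is a separator, $G-Z$ has at least two components, so some component $C'$ contains no vertex of $V$. Being nonempty, $C'$ contains a vertex $u\in U$; every neighbor of $u$ lies in $V$, yet $u$ has no neighbor in $V\setminus Z$ (all such vertices lie in the unique $V$-meeting component), so $N(u)\subseteq Z$, as required.

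For the inequality I would argue by contradiction. Assume $\kappa_U(G)<\min\{\delta_U(G),k\}$; then some vertex cut $Z$ with $|Z|<\min\{\delta_U(G),k\}$ separates two vertices $u_1,u_2\in U$, which are non-adjacent since they lie in the same color class. In particular $|Z|<k$, so the structural observation applies: of the two distinct components of $G-Z$ containing $u_1$ and $u_2$, at most one meets $V$, so at least one---say the one containing $u_2$---contains no vertex of $V$. Then all neighbors of $u_2$ lie in $Z$, giving $|Z|\geq\deg(u_2)\geq\delta_U(G)$, which contradicts $|Z|<\delta_U(G)$.

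The step I expect to be the conceptual crux is handling the degenerate side of the separation. In the proof of Theorem~\ref{kappaV} both separated vertices lie in $V$, and one applies the Hall-type bound of Theorem~\ref{LMMR} to each side; here the separated vertices lie in $U$, and a component may contain no vertex of $V$ at all. Recognizing that this is not an annoyance to be circumvented but precisely the phenomenon that produces a vertex with $N(u)\subseteq Z$---and that it is exactly this case in which the operative bound drops from $k$ to $\delta_U(G)$---is what ties the two conclusions together. An alternative route mirroring the $\varepsilon_1,\varepsilon_2$ counting of Theorem~\ref{kappaV} would reach the same point, but would force the empty-$V$-side into a separate case, so the component-based formulation above is cleaner.
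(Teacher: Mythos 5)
Your proof is correct, and it reaches both conclusions by a route that is organized differently from the paper's, even though the two share the same underlying dichotomy. The paper argues directly on $G-Z$: it splits into the case where $G-Z$ has an isolated vertex of $U$ (which immediately forces $N(u)\subseteq Z$ and hence $|Z|\geq\delta_U(G)$) and the case where it does not, and in the latter case it re-runs, essentially verbatim, the $\varepsilon_1,\varepsilon_2$ counting from the proof of Theorem~\ref{kappaV} (using Theorem~\ref{LMMR} on each nonempty side) to contradict $|Z|<k$. You instead invoke Theorem~\ref{kappaV} as a black box, through the observation that a separator of size less than $k$ leaves at most one component meeting $V$, and combine it with connectivity (Theorem~\ref{LN}); your degenerate case (a component with no vertex of $V$) coincides with the paper's isolated-vertex case, since in a bipartite graph such a component is exactly an isolated vertex of $U$. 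What your formulation buys is the elimination of the duplicated Hall-type counting, an explicit logical dependence of Theorem~\ref{kappaU} on Theorem~\ref{kappaV}, and a transparent derivation of the ``Moreover'' clause, which the paper dispatches with only the remark that it ``follows from the previous analyses''; what the paper's inlined counting buys is a proof that is self-contained at the level of Theorem~\ref{LMMR}, at the cost of repetition.
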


\begin{proof}
Let $G=(U,V;E)$ be a $k$-critical-bipartite graph of order $(n,m)$.  Towards a contradiction, suppose that $\kappa_U(G)< \min\{\delta_U(G),k\}$. So there exists a set $Z \subset U \cup V$ with $|Z|<\min\{\delta_U(G),k\}$ that separates two vertices $u_1, u_2$ in $U$. Let $Z_1 = Z \cap U$ and $Z_2 = Z \cap V$. Note that there is no path between $u_1$ and $u_2$ in $G'=(U',V'; E')=G[(U\setminus Z_1)\cup(V\setminus Z_2)]$. So we can choose a partition of $G'$ into two subgraphs $G'_1=(U'_1,V'_1;E'_1)$ and $G'_2=(U'_2,V'_2;E'_2)$ that are unions of components of $G'$ with $u_1 \in U'_1$ and $u_2 \in U'_2$. 

Suppose first that the graph $G'$ contains an isolated vertex $u\in U'$, then $|Z_2|\geq \delta_{U}$ since $N(u)\subset Z_2$, a contradiction. 

Assume now that the graph $G'$ does not contain an isolated vertex $u\in U'$, hence $|V_i'|>0$ for $i=1,2$. Suppose that $|Z|=|Z_1|+|Z_2|<k$. We will proceed now like in the proof of Theorem \ref{kappaV}. 

Let $|U'_i|= |V'_i|+\varepsilon_i$ for $i=1,2$. So there is $|V'_1|+|V'_2|+|Z_2|+k=|U'_1|+|U'_2|+|Z_1|=|V'_1|+|V'_2|+|Z_1|+\varepsilon_1+\varepsilon_2$. Thus $|Z_2|+k=|Z_1|+ \varepsilon_1+\varepsilon_2$.

Since $V_1'\neq \emptyset$ by Theorem \ref{LMMR}, there is $|N_G(V'_1)| \geq |V'_1|+k$. On the other hand, since $U'_1 \cup Z_1 \supseteq N_G(V'_1)$, there is $|U'_1 \cup Z_1| \geq |N_G(V'_1)|$. So, by simplifying $|V'_1|+\varepsilon_1+|Z_1| = |U'_1| + |Z_1| \geq |V'_1|+k$, we get $\varepsilon_1+|Z_1| \geq k$. In a similar way, we get that $\varepsilon_2+|Z_1| \geq k$. 

On one hand, since $|Z_2|+k=|Z_1|+ \varepsilon_1+\varepsilon_2\geq k+\varepsilon_2$, we obtain that $|Z_2|\geq \varepsilon_2$. On the other hand, there is $|Z_1|+|Z_2|<k\leq |Z_1|+\varepsilon_2$, so $|Z_2|<\varepsilon_2$, a contradiction.

Finally, the last part of the thesis of the theorem follows from the previous analyses.
\end{proof}

\begin{thm}\label{kappa}
Let $n,m$ be positive integers such that $1<m<n$. Let $k=n-m$. Then, for any $k$-critical-bipartite graph $G=(U,V;E)$ with $|U|=n$ and $|V|=m$, there is $\kappa(G) \geq \min\{\delta(G),k\}$.
\end{thm}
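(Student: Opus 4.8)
The plan is to argue by contradiction. Since $G$ is bipartite with $n\ge 2$ it is not complete, so a minimum vertex cut exists; I would fix one, say $Z$, with $|Z|=\kappa(G)$, and assume towards a contradiction that $|Z|<\min\{\delta(G),k\}$. Then $G-Z$ is disconnected, hence it has at least two components. The whole argument then reduces to locating, inside $G-Z$, evidence that forces $Z$ to be large: either two vertices of the \emph{same} colour class sitting in different components (so that one of the already-proven set-connectivity bounds of Theorems~\ref{kappaV} and~\ref{kappaU} applies), or a single isolated vertex of $G-Z$ (whose entire neighbourhood must then lie in $Z$).

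Concretely, I would split according to how the vertices of $V$ distribute among the components of $G-Z$. In the first case, two distinct components each contain a vertex of $V$; choosing $v_1,v_2$ from different components, the set $Z$ separates $v_1$ and $v_2$, so $k\le\kappa_V(G)\le|Z|<k$ by Theorem~\ref{kappaV}, a contradiction. In the complementary case at most one component meets $V$; since there are at least two components, some component $C$ is contained in $U$. Because $G$ is bipartite, $C$ spans no edges, so being connected it must be a single vertex $u\in U$ that is isolated in $G-Z$. Then $N_G(u)\subseteq Z$, whence $|Z|\ge|N_G(u)|=\deg_G(u)\ge\delta(G)$, again contradicting the assumption. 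These two cases are complementary and exhaustive, so in every case $|Z|\ge\min\{\delta(G),k\}$, giving $\kappa(G)\ge\min\{\delta(G),k\}$.

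I do not expect a serious obstacle here; the content is a clean reduction rather than a computation, and both branches terminate in facts established earlier. The points that must be handled carefully are, first, the structural observation that in a bipartite graph any component of $G-Z$ avoiding one colour class carries no edges and is therefore a lone vertex (this is what converts the ``no $V$-pair is separated'' situation into an isolated $U$-vertex), and second, the elementary inequality $\min\{\delta_U(G),k\}\ge\min\{\delta(G),k\}$, which shows that if one instead prefers a three-way split (a separated pair in $V$, a separated pair in $U$, or neither) then the $U$-pair case handled exactly as in Theorem~\ref{kappaU} likewise yields $|Z|\ge\min\{\delta(G),k\}$. Either organisation works, and each merely chains the bounds $\kappa_V(G)\ge k$, $\kappa_U(G)\ge\min\{\delta_U(G),k\}$, and $\deg_G(u)\ge\delta(G)$ to the contrapositive hypothesis.
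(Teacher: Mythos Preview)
Your argument is correct. The paper organises the proof a little differently: it fixes a separated pair $x,y$ and splits into three cases according to their colours, citing Theorem~\ref{kappaV} when $x,y\in V$, Theorem~\ref{kappaU} when $x,y\in U$, and in the mixed case $x\in U$, $y\in V$ it picks a neighbour $x'\in N_{G-Z}(y)\setminus\{x\}$ (which exists because $\deg_G(y)\ge k+1>|Z|$) so as to reduce to the $U$--$U$ case. Your two-case split on whether $V$ is separated is slightly more economical: in the ``$V$ not separated'' branch you bypass Theorem~\ref{kappaU} entirely and read off the bound $|Z|\ge\deg_G(u)\ge\delta(G)$ directly from the isolated $U$-vertex, which is exactly the observation buried inside the proof of Theorem~\ref{kappaU}. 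The paper's route, by contrast, makes the dependence on both preceding theorems explicit. Either way the content is the same chain of inequalities, and your handling of the bipartite ``edgeless component is a singleton'' step is the one point that needs care --- you state it correctly.
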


\begin{proof}
Let $G=(U,V;E)$ be a $k$-critical-bipartite graph of order $(n,m)$. Towards a contradiction, suppose that $Z$ is a {vertex cut} for two vertices $x$ and $y$ in $G$ with $|Z| < \min\{\delta_U(G),k\}$. Let $G'=G-Z$.

If $x,y\in V$, then $|Z|\geq k$ by Theorem~\ref{kappaV}, a contradiction. For $x,y\in U$, we have $|Z|\geq\min\{\delta_U(G),k\}$ by Theorem~\ref{kappaU}, a contradiction. Finally, consider the case where $x\in U$ and $y\in V$. Choose $x'\in U' \setminus \{x\} \cap N_G'(y)$. Such a vertex exists since $G$ is $k$-critical and $n \geq k+2 > |Z|+2$. So $Z$ is a vertex cut for $x$ and $x'$, and the thesis holds by Theorem~\ref{kappaU}.
\end{proof}

By applying Theorems \ref{kappaV}, \ref{kappaU}, and \ref{kappa} to Construction~\ref{cons2}, we get the following corollary that shows that the given lower bounds are tight.

\begin{cor}
Let $n,m$ be positive integers such that $1<m<n$. Let $k=n-m$ and $\overline{G}_{n,m}=(U,V; E)$ be a graph given by Construction~\ref{cons2}. Then the following properties hold: 
\begin{itemize}
\item $\kappa_V(\overline{G}_{n,m}) \in \{k,k+1\}$,
\item $\kappa_U(\overline{G}_{n,m})=\delta_U(\overline{G}_{n,m})$,
\item $\kappa(\overline{G}_{n,m})=\delta(\overline{G}_{n,m})$.
\end{itemize}
Moreover, $\kappa_V(\overline{G}_{n,m}) = k$ if $k=1$.
\end{cor}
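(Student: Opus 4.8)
The plan is to verify each of the three bullets by combining the general lower bounds from Theorems~\ref{kappaV}, \ref{kappaU}, and \ref{kappa} with matching upper bounds coming from the explicit structure of $\overline{G}_{n,m}$ in Construction~\ref{cons2}. Throughout I will use the explicit neighborhoods: by Construction~\ref{cons2}, $N_{\overline{G}_{n,m}}(v_j)=\{u_{(i_j-\beta)\bmod n}\mid \beta\in[k+1]\}$ with $i_j=\floor{\frac{jn}{m}}$, so each $v_j$ has degree exactly $k+1$ and the $U$-neighborhoods are ``intervals'' of length $k+1$ in the cyclic order on $[n]$; consecutive indices $i_{j}$ differ by either $\floor{\frac nm}$ or $\ceil{\frac nm}$ (by Lemma~\ref{lem:remove_max} applied to successive $j$). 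From this one reads off $\delta_U$ and $\delta_V=\Delta_V=k+1$.

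For $\kappa_V$, the lower bound $\kappa_V\ge k$ is Theorem~\ref{kappaV}, so I only need an upper bound of $k+1$. I would exhibit a separator of size $k+1$: take two vertices $v_{j_1},v_{j_2}$ whose $U$-intervals are disjoint and adjacent in the cyclic order, and remove the two boundary $U$-vertices together with an appropriate small set of $V$-vertices, or more cleanly, isolate a single $v_j$ by deleting its $k+1$ neighbors, which separates it from the rest of $V$ whenever $m\ge 3$; this gives $\kappa_V\le k+1$, hence $\kappa_V\in\{k,k+1\}$. The refinement $\kappa_V=k$ when $k=1$ follows because then $k+1=2=\delta_V$ but one can find a genuine cut of size exactly $k=1$; concretely, when $k=1$ the graph is essentially a cycle-like structure and a single well-chosen vertex separates two $V$-vertices, matching the lower bound $\kappa_V\ge 1$.

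For $\kappa_U$ and $\kappa$, Theorems~\ref{kappaU} and~\ref{kappa} give $\kappa_U\ge\min\{\delta_U,k\}$ and $\kappa\ge\min\{\delta(G),k\}$, and the trivial upper bounds $\kappa_U\le\delta_U$ and $\kappa\le\delta(G)$ always hold (deleting all neighbors of a minimum-degree vertex isolates it). So it suffices to show $\delta_U\le k$ and $\delta(\overline{G}_{n,m})\le k$, which collapses the minima. Since $\delta_U=\floor{\frac{m(k+1)}{n}}$ (as noted after Theorem~\ref{positive}) and $m<n$ forces $\frac{m(k+1)}{n}<k+1$, we get $\delta_U\le k$; and because $\delta(G)=\min\{\delta_U,\delta_V\}=\min\{\delta_U,k+1\}=\delta_U\le k$ as well, both minima equal the respective minimum degree. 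Combining the lower bounds with these upper bounds yields $\kappa_U(\overline{G}_{n,m})=\delta_U(\overline{G}_{n,m})$ and $\kappa(\overline{G}_{n,m})=\delta(\overline{G}_{n,m})$.

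The main obstacle I anticipate is the $\kappa_V$ computation, specifically pinning down when the value is exactly $k$ versus $k+1$ and establishing the tight case $k=1$. The lower bound is handed to us, but producing a separator of size exactly $k$ (rather than $k+1$) requires exploiting the cyclic interval structure carefully: one must locate two $V$-vertices whose intervals overlap in just the right way so that removing $k$ vertices (a mix of $U$- and $V$-vertices) disconnects them, and verify via Lemma~\ref{lem:remove_max} that the overlaps of consecutive intervals have the needed size. For general $k$ this delicate overlap bookkeeping is what separates the clean bound $\{k,k+1\}$ from an exact value, whereas the $\kappa_U$ and $\kappa$ bullets reduce to the elementary degree inequality $\delta_U\le k$ and are essentially immediate.
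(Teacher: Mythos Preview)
Your proposal is correct and follows essentially the same route as the paper: the lower bounds come directly from Theorems~\ref{kappaV}, \ref{kappaU}, and \ref{kappa}, while the upper bounds come from the trivial inequalities $\kappa_V\le\delta_V=\Delta_V=k+1$, $\kappa_U\le\delta_U$, $\kappa\le\delta$, together with the observation that $\delta_U=\floor{m(k+1)/n}\le k$ so the minima collapse. The paper's proof is just a terser version of what you wrote (it does not spell out the separator for the $\kappa_V$ upper bound, and for $k=1$ it simply points to the example $\overline{G}_{6,5}$ rather than arguing that the graph is a path); your extra detour through disjoint intervals for $\kappa_V$ is unnecessary, and the caveat $m\ge3$ can be dropped, but none of this is wrong.
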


\begin{proof}
Since $\Delta_V(\overline{G}_{n,m})=k+1$, by Theorem \ref{kappaV}, there is $\kappa_V(\overline{G}_{n,m}) \in \{k,k+1\}$. Since $\delta_U(\overline{G}_{n,m})=\floor{\frac{m(k+1)}{n}}$ and $\frac{m(k+1)}{n}<k+1$ for $n>m>1$, by Theorem \ref{kappaU}, there is $\kappa_U(\overline{G}_{n,m})=\delta_U(\overline{G}_{n,m})$. By Theorem \ref{kappa}, there is $\kappa(\overline{G}_{n,m})=\delta(\overline{G}_{n,m})$. Finally, the case where $k=1$ is easy to check (for example, consider removing $u_0$ in the graph $\overline{G}_{6,5}$ in Figure \ref{newcons}). %\comK{Nie przychodzi mi do głowy jak udowodnić że $\kappa_V(\overline{G}_{n,m}) = k$ jak $k>1$. Ale, w końcu, nie jest to chyba tak ważne. Przypadek $k=1$ już nam daje, że ograniczenie jest ``tight''. A ogólna charakteryzacja grafów z $\kappa_V(\overline{G}_{n,m}) = k$ to już Problem \ref{prob_kappaV} ;-)}
\end{proof}

Note that given positive integer values $n,m$ such that $n>m>1$ and $k=n-m$, for any $\kappa\in\{1,\ldots,m\}$, there exists a $k$-critical-bipartite graph $G=(U,V; E)$ of order $(n,m)$ with connectivity $\kappa(G)=\kappa$. Indeed, if $\kappa=m$, then $G=K_{n,m}$, otherwise (i.e. $\kappa<m$)  let $G'=(V,U;E)$ be a complete bipartite graph $K_{n,m}$. If we pick any vertex $u\in U$  and delete $m-\kappa$ incident edges with $v$, the obtained graph $G=(U,V;E)$ is $k$-critical and $\kappa$-connected.

\section{Final remarks}\label{sec:final_remarks}

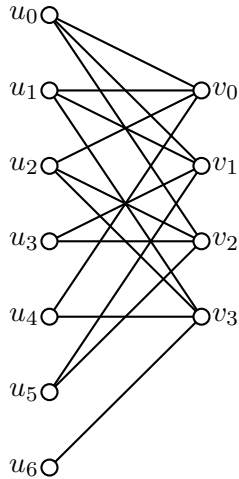
\begin{figure}[t]
\begin{center}
\begin{tikzpicture}[scale=1,style=thick,x=1cm,y=1cm]
\def\vr{3pt} % \vr = vertex radius;

% define vertices
%%%%%
%%%%%
\path (0,0) coordinate (u7);
\path (0,1) coordinate (u6);
\path (0,2) coordinate (u5);
\path (0,3) coordinate (u4);
\path (0,4) coordinate (u3);
\path (0,5) coordinate (u2);
\path (0,6) coordinate (u1);
%\path (2,1) coordinate (v6);
\path (2,2) coordinate (v4);
\path (2,3) coordinate (v3);
\path (2,4) coordinate (v2);
\path (2,5) coordinate (v1);
%  edges
\draw (u7) -- (v4) -- (u5) -- (v1) -- (u1) -- (v2) -- (u2) -- (v4);
\draw (u6) -- (v3) -- (u1);
\draw (u6) -- (v2) -- (u4) -- (v3);
\draw (v4) -- (u3) -- (v1) -- (u2);
\draw (u3) -- (v3);

\draw (u1) [fill=white] circle (\vr);
\draw (u2) [fill=white] circle (\vr);
\draw (u3) [fill=white] circle (\vr);
\draw (u4) [fill=white] circle (\vr);
\draw (u5) [fill=white] circle (\vr);
\draw (u6) [fill=white] circle (\vr);
\draw (u7) [fill=white] circle (\vr);

\draw (v1) [fill=white] circle (\vr);
\draw (v2) [fill=white] circle (\vr);
\draw (v3) [fill=white] circle (\vr);
\draw (v4) [fill=white] circle (\vr);

\draw[anchor = east] (u7) node {$u_6$};
\draw[anchor = east] (u6) node {$u_5$};
\draw[anchor = east] (u5) node {$u_4$};
\draw[anchor = east] (u4) node {$u_3$};
\draw[anchor = east] (u3) node {$u_2$};
\draw[anchor = east] (u2) node {$u_1$};
\draw[anchor = east] (u1) node {$u_0$};
\draw[anchor = west] (v4) node {$v_3$};
\draw[anchor = west] (v3) node {$v_2$};
\draw[anchor = west] (v2) node {$v_1$};
\draw[anchor = west] (v1) node {$v_0$};

%\draw(1,0.5) node %{$\overline{G}_{6,5}^2$};

\end{tikzpicture}
%\end{center}
\caption{Minimum $k$-critical-bipartite graph with $\delta_U < \floor{\frac{(n-m+1)m}{n}}$}\label{small_delta}

\end{center}
\end{figure}

Let $G=(U,V;E)$ with $|U|=n$, $|V|=m$ $n>m>1$, $k=n-m$ be a minimum $k$-critical-bipartite graph. Then $\delta_V=k+1$, therefore $\kappa_V\in\{k,k+1\}$ by Theorem \ref{kappaV}. For example, note that $\kappa_{V}(\overline{G}_{6,5})=1$ (see Figure~\ref{newcons}). Therefore we pose the following open problem.

\begin{prob}\label{prob_kappaV}
Characterize all minimum $k$-critical-bipartite graphs for which $\kappa_V=k$.
\end{prob}

Recall that for any minimum $k$-critical-bipartite graph $G=(U,V;E)$ of order $(n,m)$, with $k=n-m+1)$, there is $|E|=(k+1)m$ and $\Delta_U=\ceil{\frac{(k+1)m}{n}}$. And, to have  $(k+1)m$ edges, the number of vertices in $U$ of degree $\Delta_U$ has to be at least $(n-m+1)m-n\floor{\frac{(k+1)m}{n}}$. But there is some flexibility with respect to the degree of other vertices in $U$: there may be $\delta_U < \floor{\frac{(k+1)m}{n}}$ (see Figure~\ref{small_delta} for an example). So we pose the following open problem.

\begin{prob}\label{prob_small_delta}
Determine if there exist minimum $k$-critical-bipartite graphs $G=(U,V;E)$ of order $(n,m)$ with $\delta_U=\delta$ for any $\delta\in\{1,2.\ldots,\floor{\frac{m(k+1)}{n}}-1\}$.
\end{prob}

The property of being $k$-critical-bipartite, and fault-tolerance in general, has strong relations with connectivity (besides the results presented in this paper, see, for example, the work of Cichacz et al.~\cite{kliki}). Let us recall that, by Theorem~\ref{th:tilde}, $G$ is $k$-critical-bipartite if and only if $\tilde{G}$ is $k$-extendable, and there is the following result by Robertson et al.
\begin{thm}[\cite{RST99}]\label{thm_RST99}
    Let $G=(U,V; E)$ be a connected bipartite graph, let $M$ be a perfect matching in $G$, and let $k \geq 1$ be an integer. Then $G$ is $k$-extendable if and only if $D(G,M)$ is strongly $k$-connected, where $D(G,M)$ is the directed graph obtained by directing every edge from $U$ to $V$, and contracting every edge of $M$.
\end{thm}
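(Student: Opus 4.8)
The plan is to translate the statement entirely into the digraph $D := D(G,M)$ and then recognize the resulting condition as strong $k$-connectivity via Menger's theorem, using Plummer's Hall-type characterization as the bridge. Write $t := |U| = |V|$ (equal because $G$ has a perfect matching), and index the matching so that $M = \{u_0v_0, \ldots, u_{t-1}v_{t-1}\}$. After orienting every edge from $U$ to $V$ and contracting each $u_iv_i$, the vertices of $D$ correspond bijectively to the indices $[t]$ (one per matching edge), and $D$ has an arc $i \to j$ exactly when $u_iv_j \in E \setminus M$. Thus subsets $X \subseteq U$ correspond bijectively to vertex sets $S = \{i : u_i \in X\} \subseteq [t]$ of $D$. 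I would use the characterization of Theorem~\ref{thm:extendability_ft}(2): since $G$ has a perfect matching, $G$ is $k$-extendable iff $|N_G(X)| \ge |X| + k$ for every nonempty $X \subseteq U$ with $|X| \le t-k$.

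First I would prove the key neighborhood lemma: for $X \subseteq U$ with corresponding $S \subseteq [t]$,
\[
|N_G(X)| = |S| + |N^+_D(S) \setminus S|.
\]
Indeed, the matching edges contribute precisely the vertices $\{v_i : i \in S\}$, while a non-matching edge $u_iv_j$ with $i \in S$ contributes $v_j$ with $j \ne i$, that is, an out-neighbour $j \in N^+_D(S)$ (out-neighbours already lying in $S$ being counted once). Consequently the bound $|N_G(X)| \ge |X|+k$ is equivalent to the out-expansion inequality $|N^+_D(S)\setminus S| \ge k$, and Plummer's theorem becomes the condition $(\star^+)$: $|N^+_D(S)\setminus S| \ge k$ for every $\emptyset \ne S \subseteq [t]$ with $|S| \le t-k$. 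Since $k$-extendability is symmetric under exchanging the two colour classes (and arc reversal in $D$ turns out-neighbourhoods into in-neighbourhoods), the same argument applied to $V$ yields the dual in-expansion condition $(\star^-)$, with $N^-_D$ in place of $N^+_D$.

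It then remains to show that $(\star^+)$ and $(\star^-)$ together are equivalent to $D$ being strongly $k$-connected. The clean target is the unrestricted Menger form: $D$ is strongly $k$-connected iff $t \ge k+1$ and every $\emptyset \ne S \subsetneq [t]$ with $S \cup N^+_D(S) \ne [t]$ satisfies $|N^+_D(S)\setminus S| \ge k$; this follows from Menger's theorem by taking $S$ to be the set reachable from one endpoint after deleting a minimum separator. One direction is immediate: if $D$ is strongly $k$-connected, then for $|S| \le t-k$ either $S$ dominates everything, in which case the boundary has size $t-|S| \ge k$, or it does not, in which case the unrestricted inequality applies; either way $(\star^+)$ holds, so $G$ is $k$-extendable.

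The hard part will be the converse, and here lies the main obstacle: $(\star^+)$ only controls sets of size at most $t-k$, whereas the Menger condition ranges over arbitrarily large $S$. The crux is that the out-expansion cannot be used alone; I would bring in the dual condition $(\star^-)$ through its \emph{singleton} case, which asserts exactly that every vertex of $D$ has in-degree at least $k$, i.e.\ $\delta^-(D) \ge k$. Then, if some $S$ with $|S| > t-k$ admitted a vertex $j \notin S \cup N^+_D(S)$, no arc would enter $j$ from $S$, so all in-neighbours of $j$ would lie in $[t]\setminus S$, forcing $\deg^-(j) \le |[t]\setminus S| - 1 \le k-2$, contradicting $\delta^-(D) \ge k$. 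Hence no large $S$ can witness a failure of the Menger condition, and for the remaining sets ($|S| \le t-k$) condition $(\star^+)$ applies directly, closing the equivalence and proving the theorem. I expect the delicate point to be precisely this interplay between the size-restricted Hall bound and the unrestricted connectivity condition, which is resolved by the minimum-degree consequence of the dual expansion.
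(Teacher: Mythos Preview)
The paper does not prove this theorem; it is quoted from \cite{RST99} as a known result and used only as motivation in the final remarks, so there is no in-paper proof to compare against.

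That said, your proposal is a correct self-contained argument. The translation $|N_G(X)|=|S|+|N^+_D(S)\setminus S|$ is right (matching edges contribute $\{v_i:i\in S\}$, non-matching edges contribute exactly the out-boundary in $D$), and together with Plummer's Hall-type characterization it yields the size-restricted expansion conditions $(\star^+)$ and $(\star^-)$. The point you correctly isolate as the crux---bridging the gap between the size restriction $|S|\le t-k$ in Plummer's theorem and the unrestricted Menger condition---is handled cleanly: for $|S|>t-k$ the singleton case of $(\star^-)$ gives $\delta^-(D)\ge k$, which forces $S\cup N^+_D(S)=[t]$ and makes such $S$ vacuous for the separator criterion. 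One minor remark: you should state explicitly that $t\ge k+1$, which is needed both to invoke the singleton case of $(\star^\pm)$ and as part of the definition of strong $k$-connectivity; this follows from the standing hypothesis $k\le (|V(G)|-2)/2$ in the definition of $k$-extendability.
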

Since the respective auxiliary graphs can be constructed in polynomial time, testing if a graph is $k$-critical-bipartite reduces to testing connectivity in directed graphs, which can also be done efficiently (see, for example, the results of Henziger et al. \cite{H00}). But, given a bipartite graph $G=(U,V;E)$, besides testing if $G$ is $k$-critical-bipartite, it is valuable for applications to find a minimum supergraph (adding edges, augmentation) or minimum subgraph (removing edges, sparsification) of $G$ that is $k$-critical-bipartite. However, unlike testing connectivity, the corresponding edge modification problems tend to be harder and not well understood (see the work of Crespelle et al.~\cite{Crespelle2023} for a recent review). We believe that the relations between $k$-critical-bipartiteness and connectivity will permit us to adapt methods developed for edge modification problems related to connectivity to work with $k$-critical-bipartiteness. And characterizing minimum $k$-critical-bipartite graphs is a valuable step in this direction. We terminate with the following open problem.

\begin{prob}\label{prob_complexity}
Given a bipartite graph $G=(U,V;E)$ of order $(n,m)$, what is the complexity of finding a minimum supergraph (subgraph) of $G$ that is $k$-critical-bipartite.
\end{prob}

\bibliographystyle{abbrvnat}
\bibliography{matchings}

\end{document}